\DeclareSymbolFont{cyrletters}{OT2}{wncyr}{m}{n}
\newcommand{\Sha}{\text{{\brus
SH}}}
\newtheorem{theorem}{Theorem}
\newtheorem{lemma}[theorem]{Lemma}
\newtheorem{proposition}[theorem]{Proposition}
\newcommand{\Gal}{\mbox{Gal}}
\newcommand\bZ{\mathbb Z}
\font\brus=wncyr10.240pk scaled 1200 .240pk
\begin{document}
\title[The Multinorm Principle]{The Multinorm Principle for Linearly Disjoint Galois Extensions}
\author[Pollio]{Timothy P. Pollio}
\author[Rapinchuk]{Andrei S. Rapinchuk}

\begin{abstract}
Let $L_1$ and $L_2$ be finite separable extensions of a global field
$K$, and let $E_i$ be the Galois closure of $L_i$ over $K$ for $i =
1, 2$. We establish a local-global principle for the product of
norms from $L_1$ and $L_2$ (so-called {\it multinorm principle})
provided that the extensions $E_1$ and $E_2$ are linearly disjoint
over $K$.
\end{abstract}

\date{\tt Version of \today}

\maketitle

\section{Introduction}
Let $K$ be a global field.  Given a finite extension $L/K$, we let
$J_K$ and $J_L$ denote the groups of ideles of $K$ and $L$
respectively, and let $N_{L/K}:J_L \rightarrow J_K$ denote the
natural extension of the norm map associated with $L/K$ (cf.
\cite[p.~73-75]{Cass}).  Then the extension $L/K$ is said to satisfy
the \textit{Hasse norm principle} if
\[ K^\times \cap N_{L/K}(J_L) = N_{L/K}(L^\times).  \]
The classical result of Hasse states that this is always the case if
$L/K$ is a cyclic Galois extension.  For general extensions (even
Galois extensions), the Hasse principle does not necessarily hold,
and its investigation has received a lot of attention.  The
obstruction to the Hasse principle is given by the quotient
\[\Sha(L/K)=\frac{K^\times \cap N_{L/K}(J_{L})}{N_{L/K}(L^\times)}\]
which is a finite group called the \textit{Tate-Shafarevich group}
of the extension $L/K$. (We note that it coincides with the
Tate-Shafarevich group of the corresponding norm torus
$\mbox{R}^{(1)}_{L/K}(\mbox{GL}_1)$, cf. \cite{Vos}, \S 11).

In \cite[p.~198]{Cass}, Tate gave the following cohomological
computation of $\Sha(L/K)$ for a Galois extension $L/K$: {\it Let $G
= \mathrm{Gal}(L/K)$, and for a valuation $v$ of $K,$ let $G^v$ be
the decomposition group of (a fixed extension of) $v$. Then
$\Sha(L/K)$ is the dual of (hence is isomorphic to) the kernel of
the map $H^3(G , \bZ) \to \prod_v H^3(G^v , \bZ)$ induced by
restriction.} Various aspects of the Hasse principle were
investigated in \cite{Gurak}, \cite{PlD1}, and \cite{PlD2}, and a
computation of $\Sha(L/K)$ for an arbitrary finite extension $L/K$
in terms of so-called representation groups of the relevant Galois
groups was given by Drakokhrust \cite{Drak}.

In \cite{Hurlimann}, H\"urlimann considered the tori of norm type
associated with a pair of finite extensions $L_1  ,  L_2$ of a
global field $K.$ The triviality of the Tate-Shafarevich group for
this torus is equivalent to the fact that
\begin{equation}\tag{M}
  K^\times \cap N_{L_1/K}(J_{L_1})N_{L_2/K}(J_{L_2})=N_{L_1/K}(L_1^\times)N_{L_2/K}(L_2^\times).
  \label{M}
\end{equation}
Following \cite{PlR}, we say that the pair $L_1  ,  L_2$ satsifies
the {\it multinorm principle} if (\ref{M}) holds. It was shown in
\cite{Hurlimann} that this is indeed the case if $L_1$ is a cyclic
Galois extension of $K$ and $L_2$ is an arbitrary Galois extension
(a similar result was independently obtained by Colliot-Th\'el\`ene
and Sansuc \cite{CTS}).  A more general sufficient condition for the
multinorm principle was given in \cite{PlR}, Proposition 6.11.  This
result was used to give a simplified proof of the Hasse principle
for Galois cohomology of simply connected outer forms of type $A_n$
over number fields (cf. \cite{PlR}, Ch. VI) and in the analysis of
the Margulis-Platonov conjecture for anisotropic inner forms of type
$A_n$ ({\it loc. cit.}, \S 9.2); it was also employed in \cite{PR2}
in the computation of the metaplectic kernel. More recently, another
sufficient condition for the multinorm principle was given in
\cite{PR} (cf. Proposition 4.2) in order to study the local-global
principle for embedding fields with an involutive automorphism into
simple algebras with involution; some further applications of this
result can be found in \cite{Gille}.

It should be emphasized that in {\it all} of these results it was
assumed that one of the extensions satisfies the Hasse principle. In
this light, the main result of this note looks quite surprising: we
show that no assumption of this nature is actually needed.

\vskip2mm

\noindent \textbf{Theorem.}\: \textit{Let $L_1$ and $L_2$ be two
finite separable extensions of a global field $K$, and let $E_i$ be
the Galois closure of $L_i$ over $K$ for $i = 1, 2$. If $E_1 \cap
E_2 = K$ (i.e., $E_1$ and $E_2$ are linearly disjoint over $K$) then
the pair $L_1 , L_2$ satisfies the multinorm principle.}

\vskip2mm

We notice that the conclusion of the theorem can be false for
non-linearly disjoint extensions. For example, if $L_1 = L_2 =: L$,
then the multinorm principle is equivalent to the norm principle for
$L/K$, hence may fail. See \S 4 for more sophisticated examples and
a discussion of a more general conjecture.

\vskip1mm

The proof of the theorem is based on the following sufficient
condition for the multinorm principle.

\begin{proposition}\label{P:1}
Let $L_1$ and $L_2$ be two finite separable extensions of $K$ such
that their Galois closures $E_1$ and $E_2$ satisfy $E_1 \cap E_2 =
K.$ Set $L = L_1L_2.$ If the map
\[ \phi: \Sha(L/K) \rightarrow \Sha(L_1/K) \times \Sha(L_2/K) \]
induced by the diagonal embedding $K^\times \hookrightarrow K^\times
\times K^\times$ is surjective, then the pair  $L_1  ,  L_2$
satisfies the multinorm principle.
\end{proposition}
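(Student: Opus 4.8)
The plan is to establish the reverse of the obvious inclusion in $(\mathrm{M})$. Write $A = N_{L_1/K}(J_{L_1})$, $B = N_{L_2/K}(J_{L_2})$, $R = N_{L/K}(J_L)$ and $D = N_{L_1/K}(L_1^\times)\,N_{L_2/K}(L_2^\times)$, so that $(\mathrm{M})$ reads $K^\times \cap AB = D$; since $D \subseteq K^\times \cap AB$ always, it suffices to prove $K^\times \cap AB \subseteq D$. Because $N_{L/K} = N_{L_i/K}\circ N_{L/L_i}$, one has at the outset $R \subseteq A$, $R \subseteq B$, and $N_{L/K}(L^\times) \subseteq N_{L_i/K}(L_i^\times)$ (so that $\phi$ is indeed defined). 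Fix $a \in K^\times \cap AB$ and write $a = \alpha\beta$ with $\alpha \in A$, $\beta \in B$; the goal is $a \in D$.

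First I would use class field theory, together with the hypothesis $E_1 \cap E_2 = K$, to upgrade this arbitrary factorization $a = \alpha\beta$ into one whose two factors already lie in $K^\times$. Passing to the idele class group $C_K = J_K/K^\times$ and writing $\bar{(\,\cdot\,)}$ for images there, the relation $a \in K^\times$ forces $\bar\alpha = \bar\beta^{-1} \in \bar A \cap \bar B$. By reciprocity, $\bar A$ and $\bar B$ are the norm subgroups attached to the maximal subextensions $K_1 \subseteq L_1$ and $K_2 \subseteq L_2$ that are abelian over $K$, while $\bar R$ is attached to the maximal subextension $K_0 \subseteq L$ abelian over $K$; linear disjointness gives $\mathrm{Gal}(E_1E_2/K) = \mathrm{Gal}(E_1/K)\times\mathrm{Gal}(E_2/K)$, and a short computation with commutator subgroups and fixed fields yields $K_0 = K_1K_2$, hence $\bar A \cap \bar B = \bar R$. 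Consequently $\bar\alpha = \bar\rho$ for some $\rho \in R$, i.e. $\alpha = \rho b$ with $b \in K^\times$. Since $\rho \in R \subseteq A$ we get $b \in K^\times \cap A$, and setting $b' := a b^{-1} = \rho\beta$ we get $b' \in K^\times$ and, as $\rho \in R \subseteq B$, also $b' \in K^\times \cap B$. Thus $a = b b'$ with $b \in K^\times \cap A$ and $b' \in K^\times \cap B$.

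The second and decisive step is where surjectivity of $\phi$ enters, and it is very short. Let $c \in K^\times \cap A = K^\times \cap N_{L_1/K}(J_{L_1})$ be arbitrary and apply surjectivity of $\phi$ to the pair $([c],0) \in \Sha(L_1/K)\times\Sha(L_2/K)$: there is $d \in K^\times \cap N_{L/K}(J_L)$ with $d \equiv c \pmod{N_{L_1/K}(L_1^\times)}$ and $d \equiv 1 \pmod{N_{L_2/K}(L_2^\times)}$, i.e. $d \in N_{L_2/K}(L_2^\times)$ and $c \in d\,N_{L_1/K}(L_1^\times)$; therefore $c \in N_{L_2/K}(L_2^\times)\,N_{L_1/K}(L_1^\times) = D$. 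This proves $K^\times \cap A \subseteq D$, and symmetrically $K^\times \cap B \subseteq D$. Combining with the factorization $a = b b'$ of the previous step and the fact that $D$ is a subgroup of $K^\times$, we obtain $a \in D\cdot D = D$, as required.

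The genuinely routine portion is the class-field-theoretic identification of $\bar A,\bar B,\bar R$ with norm subgroups of abelian subextensions and the group-theoretic check that $K_0 = K_1K_2$; I expect the main obstacle to be carrying out this first reduction cleanly — in particular, pinning down that linear disjointness is used precisely to force $\bar A \cap \bar B = \bar R$ — after which the role played by the hypothesis on $\phi$ is entirely transparent.
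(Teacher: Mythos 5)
Your proof is correct, and it takes a genuinely different route from the paper's while drawing on the same two circles of ideas.

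The paper's proof proceeds through two auxiliary results. It first establishes (Lemma \ref{L:1}, extracted from \cite{PlR}, Prop.\ 6.11) that $K^\times \cap N_{L_1/K}(J_{L_1})N_{L_2/K}(J_{L_2}) = ST$ where $S = K^\times \cap N_{L/K}(J_L)$ and $T = N_{L_1/K}(L_1^\times)N_{L_2/K}(L_2^\times)$; this is where $E_1 \cap E_2 = K$ and class field theory enter. It then feeds the surjectivity of $\phi$ through an abstract exact-sequence lemma (Lemma \ref{L:2}) to conclude $S \subseteq T$ (condition (4) of Proposition \ref{P:2}). Your argument reaches the same destination but reorganizes both steps. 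Your Step~1 uses the same class-field-theoretic facts (reduction of idelic norm groups to norm groups of maximal abelian subextensions, compositum/intersection duality, and $K_0 = K_1K_2$ coming from linear disjointness — the identity $M = M_1M_2$ in the paper's notation) but packages them as the inclusion $K^\times \cap AB \subseteq (K^\times \cap A)(K^\times \cap B)$, which is a consequence of, though slightly weaker than, Lemma \ref{L:1}. Your Step~2 then verifies condition (2) of Proposition \ref{P:2} — not condition (4) — by a very transparent lifting argument: given $c \in K^\times\cap N_{L_1/K}(J_{L_1})$ you lift $([c],0)$ through $\phi$ and read off $c \in D$ directly, dispensing entirely with Lemma \ref{L:2}. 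What the paper's route buys is modularity: Lemma \ref{L:1} is reused verbatim in the reduction of the Theorem to the Galois case, and Proposition \ref{P:2} records all four equivalent reformulations once and for all. What your route buys is directness: the role of $\phi$'s surjectivity is made completely explicit in two lines, and no abstract group-theoretic lemma is needed. Both are valid; if you wanted to present your version you should still spell out the commutator computation establishing $K_0 = K_1K_2$ (which you only gesture at), since that is precisely where the hypothesis $E_1 \cap E_2 = K$ does its work.
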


In \S 2, we prove the proposition and also reduce the proof of the
theorem to the case where both $L_1$ and $L_2$ are Galois extensions
of $K$. Then, to complete the proof of the theorem, we verify that
the map $\phi$ is in fact surjective for any two linearly disjoint
Galois extensions  - cf. Proposition 3 in \S 3. Finally, \S 4
contains some additional results and examples related to the
multinorm principle.

\section{Proof of Proposition \ref{P:1}}\label{S:P1}

The following statement will enable us to prove Proposition
\ref{P:1}, but is also  of independent interest.
\begin{proposition}\label{P:2}
Let $L_1$ and $L_2$ be finite extensions of $K$ such that their
Galois closures $E_1$ and $E_2$ satisfy $E_1 \cap E_2=K$. Let
$L=L_1L_2$, and let
\[
S=K^\times\cap N_{L/K}(J_L) \ \ \text{and} \ \
T=N_{L_1/K}(L_1^\times)N_{L_2/K}(L_2^\times).
\]
Then the following conditions are equivalent:

\vskip2mm

{\rm (1)} The pair $L_1 , L_2$ satisfies the multinorm principle;

\vskip1mm

{\rm (2)} $K^{\times} \cap N_{L_i/K}(J_{L_i}) \subset T$ for $i = 1$
and $2$;

\vskip1mm

{\rm (3)} $K^{\times} \cap N_{L_i/K}(J_{L_i}) \subset T$ for at
least one index $i \in \{1 , 2\}$;

\vskip1mm

{\rm (4)} $S \subset T.$
\end{proposition}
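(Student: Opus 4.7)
My plan is to prove the cycle $(1)\Rightarrow(2)\Rightarrow(3)\Rightarrow(4)\Rightarrow(1)$. The first three implications require no disjointness hypothesis: for $(1)\Rightarrow(2)$, if $a\in K^\times\cap N_{L_i/K}(J_{L_i})$ then $a = a\cdot N_{L_j/K}(1)$ (with $j\neq i$) exhibits $a$ in the multinorm set, hence $a\in T$ by $(1)$. The step $(2)\Rightarrow(3)$ is tautological, and $(3)\Rightarrow(4)$ follows from the transitivity $N_{L/K} = N_{L_i/K}\circ N_{L/L_i}$, which gives $N_{L/K}(J_L)\subset N_{L_i/K}(J_{L_i})$ and hence $S\subset K^\times\cap N_{L_i/K}(J_{L_i})\subset T$.

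For the substantive implication $(4)\Rightarrow(1)$, the crucial input from linear disjointness is the class-field-theoretic identity
\[
K^\times\cdot N_{L/K}(J_L) \;=\; \bigl(K^\times\cdot N_{L_1/K}(J_{L_1})\bigr) \cap \bigl(K^\times\cdot N_{L_2/K}(J_{L_2})\bigr). \qquad (\ast)
\]
Via Artin reciprocity each side is the open subgroup of $J_K$ corresponding to an abelian extension of $K$, and $(\ast)$ reduces to the purely Galois-theoretic assertion that the maximal abelian subextension of $L = L_1L_2$ over $K$ coincides with the compositum of those of $L_1/K$ and $L_2/K$. This in turn follows from $\Gal(E_1E_2/K) = \Gal(E_1/K)\times\Gal(E_2/K)$, since the maximal abelian quotient of a direct product of finite groups is the product of the maximal abelian quotients of the factors.

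Given $a = N_{L_1/K}(\xi_1)\cdot N_{L_2/K}(\xi_2)\in K^\times$, I would set $z = N_{L_2/K}(\xi_2)$; since $z\in N_{L_2/K}(J_{L_2})$ and, via $z = a\cdot N_{L_1/K}(\xi_1)^{-1}$, also $z\in K^\times\cdot N_{L_1/K}(J_{L_1})$, identity $(\ast)$ places $z\in K^\times\cdot N_{L/K}(J_L)$. Writing $z = k\cdot N_{L/K}(y)$ with $k\in K^\times$, $y\in J_L$, and using $N_{L/K} = N_{L_2/K}\circ N_{L/L_2}$, one finds $k = N_{L_2/K}(\xi_2\cdot N_{L/L_2}(y)^{-1})\in K^\times\cap N_{L_2/K}(J_{L_2})$ and symmetrically $a/k\in K^\times\cap N_{L_1/K}(J_{L_1})$. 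Hence the entire multinorm set equals $N_1\cdot N_2$, where $N_i := K^\times\cap N_{L_i/K}(J_{L_i})$, which already reduces $(1)$ to the statement $N_1, N_2\subset T$, i.e. to $(2)$.

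The remaining and most delicate task is to deduce $N_i\subset T$ (for $i=1,2$) from the weaker hypothesis $S\subset T$. The natural inclusions $\iota_i: J_{L_i}\hookrightarrow J_L$ satisfy $N_{L/K}(\iota_i(\xi_i)) = N_{L_i/K}(\xi_i)^{[L:L_i]}$, so for $a\in N_i$ the power $a^{[L:L_i]}$ lies in $S$ and hence in $T$ by $(4)$. Upgrading these power relations to the membership $a\in T$ itself is where I expect the main obstacle to lie: it should require exploiting the remaining freedom in the CFT decomposition above (whose ambiguity is exactly $N_{L/K}(L^\times)\subset T$) in a way that rules out the torsion in $K^\times/T$ that would otherwise obstruct passage from an $[L:L_i]$-th power to the element itself. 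This final stage is exactly where the linear-disjointness hypothesis is felt most strongly.
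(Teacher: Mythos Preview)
Your cycle $(1)\Rightarrow(2)\Rightarrow(3)\Rightarrow(4)$ is fine, and the identity $(\ast)$ together with your decomposition $a = (a/k)\cdot k$ correctly yields
\[
K^\times \cap N_{L_1/K}(J_{L_1})N_{L_2/K}(J_{L_2}) \;=\; N_1\cdot N_2,\qquad N_i:=K^\times\cap N_{L_i/K}(J_{L_i}).
\]
However, this is strictly weaker than what is needed: as you yourself observe, closing the loop now requires deducing $N_i\subset T$ from $S\subset T$, and the power argument $a^{[L:L_i]}\in S\subset T$ cannot bridge this, since there is no control on $[L:L_i]$-torsion in $K^\times/T$. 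This is a genuine gap, not a technicality---proving $(4)\Rightarrow(2)$ is no easier than proving $(4)\Rightarrow(1)$ directly, so the reduction has not actually advanced the argument.

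The paper avoids this by establishing instead the sharper identity
\[
K^\times \cap N_{L_1/K}(J_{L_1})N_{L_2/K}(J_{L_2}) \;=\; S\cdot T,
\]
from which $(4)\Rightarrow(1)$ is immediate. The extra input, beyond anything equivalent to your $(\ast)$, is the \emph{injectivity} of the product-of-norms map
\[
\varphi\colon \frac{J_{L_1}}{L_1^\times\, N_{L/L_1}(J_L)}\times \frac{J_{L_2}}{L_2^\times\, N_{L/L_2}(J_L)}\;\longrightarrow\;\frac{J_K}{K^\times\, N_{L/K}(J_L)},
\]
obtained by a cardinality count using that the maximal abelian extension of $L_i$ inside $L$ is $L_iM_{3-i}$ (with $M_j$ the maximal abelian subextension of $L_j/K$). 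Injectivity of $\varphi$ forces, in any representation $a=N_{L_1/K}(x_1)N_{L_2/K}(x_2)\in K^\times$, each $x_i$ to lie in $L_i^\times\cdot N_{L/L_i}(J_L)$; writing $x_i = y_i\,N_{L/L_i}(z_i)$ with $y_i\in L_i^\times$ then gives $a = \bigl(N_{L_1/K}(y_1)N_{L_2/K}(y_2)\bigr)\cdot N_{L/K}(z_1z_2)\in T\cdot S$, with the $T$-factor coming from genuine elements of $L_i^\times$ rather than merely local norms. Your argument extracts only enough class field theory to land in $N_1N_2$; the missing injectivity of $\varphi$ is precisely what upgrades this to $TS$.
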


\vskip2mm

\noindent \textit{Proof.} The implications $(1) \Rightarrow (2)
\Rightarrow (3) \Rightarrow (4)$ are obvious, while the nontrivial
implication $(4) \Rightarrow (1)$ is a consequence of the following
statement which is extracted from the the proof of Proposition 6.11
in \cite{PlR}.

\vskip2mm

\begin{lemma}\label{L:1}
Let $L_1$ and $L_2$ be as in Proposition \ref{P:2}. Then in the
above notations we have
$$
K^{\times} \cap N_{L_1/K}(J_{L_1}) N_{L_2/K}(J_{L_2}) = ST.
$$
\end{lemma}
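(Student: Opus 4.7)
The plan is to prove the two containments separately. The containment $ST \subseteq K^\times \cap N_{L_1/K}(J_{L_1}) N_{L_2/K}(J_{L_2})$ is immediate: since $N_{L/K} = N_{L_i/K} \circ N_{L/L_i}$ for $i = 1, 2$, we have $S \subseteq N_{L_i/K}(J_{L_i})$, and the inclusion $T \subseteq N_{L_1/K}(J_{L_1}) N_{L_2/K}(J_{L_2})$ is obvious.

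For the reverse inclusion, I take an arbitrary $a = N_{L_1/K}(\alpha_1) N_{L_2/K}(\alpha_2) \in K^\times$ with $\alpha_i \in J_{L_i}$ and aim to find $\gamma_i \in L_i^\times$ and $\beta \in J_L$ satisfying
\[
a = N_{L/K}(\beta) \cdot N_{L_1/K}(\gamma_1) \cdot N_{L_2/K}(\gamma_2).
\]
If so, then $N_{L/K}(\beta) = a \cdot N_{L_1/K}(\gamma_1)^{-1} N_{L_2/K}(\gamma_2)^{-1}$ lies in $K^\times \cap N_{L/K}(J_L) = S$, placing $a$ in $ST$.

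To produce $\gamma_1, \gamma_2$ and $\beta$, I exploit the linear disjointness of $E_1$ and $E_2$, which in particular gives $L \cong L_1 \otimes_K L_2$ as $K$-algebras and, place by place, $L \otimes_K K_v \cong L_{1,v} \otimes_{K_v} L_{2,v}$. Fix a finite set $V$ of places of $K$ containing all archimedean places, all places ramified in $L/K$, and all places at which $\alpha_1$ or $\alpha_2$ fails to be a unit. By weak approximation in $L_i$, choose $\gamma_i \in L_i^\times$ approximating $\alpha_i$ extremely closely at every place of $L_i$ above $V$. The element $a' := a \cdot N_{L_1/K}(\gamma_1)^{-1} N_{L_2/K}(\gamma_2)^{-1} \in K^\times$ is then very close to $1$ at each $v \in V$, and openness of the local norm subgroup $N_{L_v/K_v}(L_v^\times) \subseteq K_v^\times$ forces $a'_v$ to lie there.

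The main obstacle is the behaviour of $a'$ at places $v \notin V$: the approximants $\gamma_i$ may acquire nontrivial valuations outside $V$, so $a'_v$ need not be a unit. This is exactly where linear disjointness is essential: the decomposition $L_v = L_{1,v} \otimes_{K_v} L_{2,v}$ pins down the residue degrees and splitting behaviour of primes of $L$ over $v$ in terms of those of $L_1$ and $L_2$, and a direct check (following the local calculation in the proof of Proposition 6.11 of \cite{PlR}) shows that $a'_v \in N_{L_v/K_v}(L_v^\times)$ at every such place. Consequently $a' \in N_{L/K}(J_L)$, yielding the desired $\beta$.
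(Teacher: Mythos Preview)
Your easy inclusion $ST \subseteq K^\times \cap N_{L_1/K}(J_{L_1})N_{L_2/K}(J_{L_2})$ is fine, and your reduction of the hard inclusion to finding $\gamma_i \in L_i^\times$ with $a' := a\,N_{L_1/K}(\gamma_1)^{-1}N_{L_2/K}(\gamma_2)^{-1} \in N_{L/K}(J_L)$ is also correct. The gap is at the places $v \notin V$.

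At such a $v$ the extension $L/K$ is unramified, so $a'_v$ is a local norm from $L$ if and only if $v(a'_v)$ lies in the subgroup of $\bZ$ generated by the residue degrees $f(w/v)$ for $w \mid v$ in $L$. Your element $a'_v$ equals a unit times $N_{L_1/K}(\gamma_1^{-1})_v\,N_{L_2/K}(\gamma_2^{-1})_v$, so its valuation lies in the subgroup generated by the $f(w_1/v)$ together with the $f(w_2/v)$. These two subgroups can differ. For a concrete obstruction, take $[L_1:K]=2$, $[L_2:K]=3$, both inert at $v$: then $L_v$ is the unramified sextic, so local norms from $L$ have valuation in $6\bZ$, whereas a product of a local norm from $L_1$ and one from $L_2$ can have any integer valuation. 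There is no reason your approximants $\gamma_i$ avoid producing such valuations at $v$, and weak approximation gives you no handle outside the finite set $V$.

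Your appeal to ``the local calculation in the proof of Proposition~6.11 of \cite{PlR}'' does not rescue this: that argument (reproduced in the paper) is not a place-by-place check but a \emph{global} class field theory argument. One shows, via the Artin isomorphisms and the decomposition $\Gal(M/K)\simeq \Gal(M/M_1)\times \Gal(M/M_2)$ of the maximal abelian subextension, that the map
\[
\varphi\colon J_{L_1}/L_1^\times N_{L/L_1}(J_L)\times J_{L_2}/L_2^\times N_{L/L_2}(J_L)\longrightarrow J_K/K^\times N_{L/K}(J_L)
\]
is an isomorphism; injectivity of $\varphi$ then forces each $\alpha_i$ to lie in $L_i^\times N_{L/L_i}(J_L)$, which is exactly the existence of your $\gamma_i$ and $\beta$. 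The linear disjointness of $E_1$ and $E_2$ enters globally (through the structure of $\Gal(M/K)$), not through the local tensor decomposition you invoke.
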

\begin{proof}
For completeness, we (succinctly) reproduce the argument given in
\cite{PlR}. Let $M_i$ be the maximal abelian
extension of $K$ contained in $L_i$ for $i=1,2$, and $M$ be the maximal abelian
extension of $K$ contained in $L.$ Then by Galois theory the fact
that $E_1 \cap E_2 = K$ implies that

\vskip2mm

\noindent $\bullet$ $M = M_1M_2$ and $\Gal(M/K)$ is naturally
isomorphic to $\Gal(M/M_1) \times \Gal(M/M_2);$

\vskip1mm

\noindent $\bullet$ the maximal abelian extension of $L_i$ contained
in $L$ is $L_i M_{3-i}$ for $i = 1, 2.$

\vskip2mm

\noindent The crucial observation is that the map
$$
\varphi \colon J_{L_1}/L^{\times}_1 N_{L/L_1}(J_L) \times
J_{L_2}/L^{\times}_2 N_{L/L_2}(J_L) \longrightarrow J_{K}/K^{\times}
N_{L/K}(J_L),
$$
induced by the product of the norm maps $N_{L_1/K}$ and $N_{L_2/K},$
is an isomorphism, which is proved by showing that $\varphi$ is
surjective and that its domain and target have the same order. To
this end, we consider the following commutative diagram
\begin{equation}\label{E:2}
\begin{array}{ccc}
J_{M_1}/M^{\times}_1N_{M/M_1}(J_M) \times
J_{M_2}/M^{\times}_2N_{M/M_2}(J_M) &
\stackrel{\psi}{\longrightarrow} & J_K/K^{\times} N_{M/K}(J_M) \\
\theta_1 \times \theta_2 \downarrow & & \downarrow \theta \\
\Gal(M/M_1) \times \Gal(M/M_2) & \stackrel{\iota}{\longrightarrow} &
\Gal(M/K)
\end{array},
\end{equation}
where $\psi$ is constructed analogously to $\varphi,$
$$
\theta_i \colon J_{M_i}/M^{\times}_iN_{M/M_i}(J_M) \to \Gal(M/M_i) \
\ \text{and} \ \ \theta \colon J_K/K^{\times} N_{M/K}(J_M) \to
\Gal(M/K)
$$
are the isomorphisms given by the corresponding Artin maps (cf.
\cite[Ch. VII]{Cass}), and $\iota$ is induced by the canonical
embeddings $\Gal(M/M_i) \to \Gal(M/K);$ the commutativity of
(\ref{E:2}) follows from Proposition 4.3 in \cite{Cass}. In our
situation, $\iota$ is an isomorphism, so $\psi$ is also an
isomorphism, implying that
\begin{equation}\label{E:3}
J_K = K^{\times} N_{M_1/K}(J_{M_1}) N_{M_2/K}(J_{M_2}).
\end{equation}
We now recall the fact that for any finite separable extension $P/F$
of global fields we have
$$
F^{\times} N_{P/F}(J_P) = F^{\times} N_{R/F}(J_R),
$$
where $R$ is the maximal abelian extension of $F$ contained in $P$
(cf. \cite[Exercise 8]{Cass}). Thus,
$$
K^{\times} N_{L_i/K}(J_{L_i}) = K^{\times}N_{M_i/K}(J_{M_i}) \ \
\text{for} \ \ i = 1, 2,
$$
which in conjunction with (\ref{E:3}) yields that
$$
J_K = K^{\times} N_{L_1/K}(J_{L_1}) N_{L_2/K}(J_{L_2}),
$$
proving that $\varphi$ is surjective. On the other hand, since
$L_1M_2$ is the maximal abelian extension of $L_1$ contained in $L,$
using the fundamental isomorphism of global class field theory we
obtain
\begin{eqnarray*}
\vert J_{L_1} /L^{\times}_1 N_{L/L_1}(J_{L}) \vert = \vert J_{L_1}
/L^{\times}_1 N_{L_1M_2/L_1}(J_{L_1M_2}) \vert = [L_1M_2 : L_1] = \\ =
[M_2 : K] = [M : M_1] = \vert J_{M_1}/M^{\times}_1 N_{M/M_1}(J_M)
\vert,
\end{eqnarray*}
and similarly
$$
\vert J_{L_2}/L^{\times}_2 N_{L/L_2}(J_L) \vert = \vert J_{M_2}
/M^{\times}_2 N_{M/M_2}(J_M) \vert \ \ \text{and} \ \ \vert
J_K/K^{\times} N_{L/K}(J_L) \vert = \vert J_K/K^{\times}
N_{M/K}(J_M) \vert.
$$
Since $\psi$ is an isomorphism, these equation imply that the domain
and the target of $\varphi$ have the same order, proving  that
$\varphi$ is in fact an isomorphism.

Now, take any $a \in K^{\times} \cap N_{L_1/K}(J_{L_1})
N_{L_2/K}(J_{L_2}),$ and write it in the form
$$
a = N_{L_1/K}(x_1) N_{L_2/K}(x_2) \ \ \text{with} \ \ x_i \in
J_{L_i}.
$$
Then $(x_1L_1^{\times}N_{L/L_1}(J_{L}) , x_2L^{\times}_2
N_{L/L_2}(J_L)) \in \mathrm{Ker}\: \varphi.$ Using the injectivity
of $\varphi$ established above, we see that we can write
$$
x_i = y_i N_{L/L_i}(z_i) \ \ \text{with} \ \ y_i \in L^{\times}_i, \
z_i \in J_L \ \ \text{for} \ \ i = 1, 2.
$$
Then
$$
a = (N_{L_1/K}(y_1)N_{L_2/K}(y_2)) N_{L/K}(z_1z_2) \in TS.
$$
This proves the inclusion
$$
K^{\times} \cap N_{L_1/K}(J_{L_1})N_{L_2/K}(J_{L_2}) \subset ST,
$$
while the reverse inclusion is obvious.
\end{proof}

\vskip2mm

\noindent {\bf Remark.} If one of the  $L_i$'s satisfies the
usual Hasse norm principle then condition (3) of Proposition
\ref{P:2} obviously holds for this $i$. This yields the multinorm
principle in this situation, which is precisely the assertion of
Proposition 6.11 in \cite{PlR}. Thus, the latter is a particular
case of our Proposition \ref{P:2}.

\vskip2mm

Before proceeding with the proof of Proposition \ref{P:1}, we will
now use Lemma \ref{L:1} to give

\vskip1mm

\noindent {\it Reduction of the theorem to the Galois case.} Let
$L_1 , L_2$ be as in the theorem, and let us assume that we already
know that their Galois closures $E_1 , E_2$ satisfy the multinorm
principle. We will now show that the pair $L_1 , L_2$  satisfies the
multinorm principle as well. Generalizing the notions introduced in
the proof of Proposition \ref{P:2}, for a pair of finite extensions
$P_1$ and $P_2$ of $K$, we set
$$
S_{P_1 , P_2} = K^{\times} \cap N_{P_1P_2/K}(J_{P_1P_2}) \ \
\text{and} \ \ T_{P_1 , P_2} = N_{P_1/K}(P^{\times}_1)
N_{P_2/K}(P^{\times}_2).
$$
We also set
$$
R_{P_1 , P_2} = K^{\times} \cap
N_{P_1/K}(J_{P_1})N_{P_2/K}(J_{P_2}).
$$
We note that for any other finite extensions $P'_1$ and $P'_2$ of
$K$ we have the inclusions
\begin{equation}\label{E:inclusion}
S_{P_1 , P_2} \subset R_{P'_1 , P_2} \ \ \text{and} \ \ S_{P_1 ,
P_2} \subset R_{P_1 , P'_2}.
\end{equation}
Now, applying Lemma \ref{L:1} twice in conjunction with
\eqref{E:inclusion}, we obtain
\begin{equation}\label{E:inclusion1}
R_{L_1 , L_2} = T_{L_1 , L_2} S_{L_1 , L_2} \subset T_{L_1 , L_2}
R_{E_1 , L_2} = T_{L_1 , L_2} T_{E_1 , L_2} S_{E_1 , L_2} \subset
T_{L_1 , L_2} T_{E_1 , L_2} R_{E_1 , E_2}.
\end{equation}
Since by our assumption the multinorm principle holds for the pair
$E_1 , E_2$, we have $R_{E_1 , E_2}  = T_{E_1 , E_2}$, so
\eqref{E:inclusion1} becomes
$$
R_{L_1 , L_2} \subset T_{L_1 , L_2}T_{E_1 , L_2} T_{E_1 , E_2} =
T_{L_1 , L_2},
$$
which means that the multinorm principle holds for the pair $L_1 ,
L_2$. \hfill $\Box$

\vskip2mm

To complete the proof of Proposition \ref{P:1}, we need  the
following elementary group-theoretic lemma.
\begin{lemma}\label{L:2}
Let $\mathcal{A}$ be an abelian group with subgroups $\mathcal{B}$
and $\mathcal{C}.$ Then the sequence
$$
\mathcal{A} \stackrel{f}{\longrightarrow}
\frac{\mathcal{A}}{\mathcal{B}} \times
\frac{\mathcal{A}}{\mathcal{C}} \stackrel{g}{\longrightarrow}
\frac{\mathcal{A}}{\mathcal{B}\mathcal{C}} \longrightarrow 1,
$$
where $f$ and $g$ are defined by
$$
f(x) = (x\mathcal{B} , x\mathcal{C}) \ \ \text{and} \ \
g(x\mathcal{B} , y\mathcal{C}) = xy^{-1}\mathcal{B}\mathcal{C},
$$
is exact.
\end{lemma}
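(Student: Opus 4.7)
\noindent\emph{Proof plan.} Since the lemma is purely formal, my plan is just to check the three ingredients of exactness in sequence: well-definedness of $g$, surjectivity of $g$, the inclusion $\operatorname{im} f \subseteq \ker g$, and the inclusion $\ker g \subseteq \operatorname{im} f$.

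The first three are one-line verifications that I would dispatch quickly. For well-definedness, replacing the representative $x$ by $xb$ (with $b \in \mathcal{B}$) and $y$ by $yc$ (with $c \in \mathcal{C}$) sends $xy^{-1}$ to $xy^{-1}bc^{-1}$ (using abelianness to commute $c^{-1}$ past $y^{-1}$), which lies in the same coset modulo $\mathcal{B}\mathcal{C}$. Surjectivity is witnessed by $g(z\mathcal{B},\,e\mathcal{C}) = z\mathcal{B}\mathcal{C}$, and $\operatorname{im} f \subseteq \ker g$ follows from the direct computation $g(f(x)) = g(x\mathcal{B}, x\mathcal{C}) = xx^{-1}\mathcal{B}\mathcal{C} = \mathcal{B}\mathcal{C}$.

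The only step with any content is the reverse inclusion $\ker g \subseteq \operatorname{im} f$. Given $(x\mathcal{B}, y\mathcal{C}) \in \ker g$, the hypothesis yields a factorization $xy^{-1} = bc$ with $b \in \mathcal{B}$ and $c \in \mathcal{C}$. I would then propose $z := xb^{-1}$ as the preimage and verify, using abelianness to rewrite $z = y(y^{-1}x)b^{-1} = y(bc)b^{-1} = yc$, that simultaneously $z \equiv x \pmod{\mathcal{B}}$ and $z \equiv y \pmod{\mathcal{C}}$; hence $f(z) = (x\mathcal{B}, y\mathcal{C})$.

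I do not foresee any real obstacle here: abelianness enters only to ensure that $\mathcal{B}\mathcal{C}$ is a subgroup (so the target of $g$ makes sense) and to permit the small commutations above, and no further idea is required.
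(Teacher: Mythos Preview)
Your proof is correct. The paper itself states this lemma as an ``elementary group-theoretic lemma'' and provides no proof at all, so there is nothing to compare against; your verification simply supplies the routine details that the authors chose to omit.
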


\vskip2mm

\noindent \textit{Proof of Proposition 1.} Applying Lemma \ref{L:2}
to the group $\mathcal{A} = K^{\times} \cap N_{L/K}(J_L)$ and its
subgroups
$$
\mathcal{B} = N_{L_1/K}(L^{\times}_1) \cap N_{L/K}(J_L) \ \
\text{and} \ \ \mathcal{C} = N_{L_2/K}(L^{\times}_2) \cap
N_{L/K}(J_L),
$$
we obtain the following exact sequence
\begin{eqnarray}\label{E:1}
K^{\times} \cap N_{L/K}(J_L) \stackrel{f}{\longrightarrow}
\frac{K^{\times} \cap N_{L/K}(J_L)}{N_{L_1/K}(L^{\times}_1) \cap
N_{L/K}(J_L)} \times \frac{K^{\times} \cap
N_{L/K}(J_L)}{N_{L_2/K}(L^{\times}_2) \cap N_{L/K}(J_L)}
\stackrel{g}{\longrightarrow} \\ \longrightarrow \frac{K^{\times}
\cap N_{L/K}(J_L)}{(N_{L_1/K}(L^{\times}_1) \cap
N_{L/K}(J_L))(N_{L_2/K}(L^{\times}_2) \cap N_{L/K}(J_L))}
\longrightarrow 1. \nonumber
\end{eqnarray}
By our assumption, the composite homomorphism
\begin{eqnarray*}
\text{{\brus SH}}(L/K) = \frac{K^{\times} \cap
N_{L/K}(J_L)}{N_{L/K}(L^{\times})}
\stackrel{\bar{f}}{\longrightarrow} \frac{K^{\times} \cap
N_{L/K}(J_L)}{N_{L_1/K}(L^{\times}_1) \cap N_{L/K}(J_L)} \times
\frac{K^{\times} \cap N_{L/K}(J_L)}{N_{L_2/K}(L^{\times}_2) \cap
N_{L/K}(J_L)} \stackrel{h}{\longrightarrow} \\
\longrightarrow \frac{K^{\times} \cap
N_{L_1/K}(J_{L_1})}{N_{L_1/K}(L^{\times}_1)} \times \frac{K^{\times}
\cap N_{L_2/K}(J_{L_2})}{N_{L_2/K}(L^{\times}_2)} = \text{{\brus
SH}}(L_1/K) \times \text{{\brus SH}}(L_2/K),
\end{eqnarray*}
where $\bar{f}$ is induced by $f$ and $h$ by the inclusions
$K^{\times} \cap N_{L/K}(J_L) \subset K^{\times} \cap
N_{L_i/K}(J_{L_i})$ for $i = 1, 2,$ is surjective. Since $h$ is
obviously injective, we conclude that $\bar{f},$ hence $f,$ is
surjective. So, the exact sequence (\ref{E:1}) yields that its third
term is trivial, i.e.
$$
S = K^{\times} \cap N_{L/K}(J_L) = (N_{L_1/K}(L^{\times}_1) \cap
N_{L/K}(J_L))(N_{L_2/K}(L^{\times}_2) \cap N_{L/K}(J_L)) \subset
$$
$$
\subset N_{L_1/K}(L^{\times}_1)N_{L_2/K}(L^{\times}_2) = T.
$$
This verifies condition (4) of Proposition 2, thereby
yielding the validity of the multinorm principle for the pair $L_1 ,
L_2.$ \hfill $\Box$

\section{Proof of the Main Theorem}\label{S:MT}

As we have seen in \S~\ref{S:P1}, it is enough to prove the main
theorem assuming that both $L_1$ and $L_2$ are Galois extensions of
$K$. In this case, the claim is a consequence of Proposition
\ref{P:1} combined with the following statement.
\begin{proposition}\label{P:3}
Let $L_1$ and $L_2$ be Galois extensions of $K$ with $L_1 \cap L_2
 = K$, and let $L = L_1L_2$.  Then the map
\[ \phi \colon \Sha(L/K) \rightarrow \Sha(L_1/K) \times \Sha(L_2/K) \] induced
by the diagonal embedding $K^{\times} \hookrightarrow
K^{\times} \times K^{\times}$ is surjective.
\end{proposition}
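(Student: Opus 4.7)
The plan is to use Tate's cohomological description of $\Sha$ recalled in the introduction. Writing $G = \Gal(L/K)$ and $G_i = \Gal(L_i/K)$, the hypothesis $L_1 \cap L_2 = K$ identifies $G$ with $G_1 \times G_2$ via the pair of restriction maps $p_i \colon G \to G_i$. Setting
\[
X^3(H) := \ker\Bigl(H^3(H,\bZ) \to \prod_v H^3(H^v,\bZ)\Bigr),
\]
Tate's theorem gives $\Sha(F/K) \cong X^3(\Gal(F/K))^\vee$ (Pontryagin dual), and a functoriality argument shows that $\phi$ is dual to the map
\[
\phi^* \colon X^3(G_1) \oplus X^3(G_2) \longrightarrow X^3(G), \qquad (\alpha_1,\alpha_2) \longmapsto \mathrm{inf}_{p_1}(\alpha_1) + \mathrm{inf}_{p_2}(\alpha_2).
\]
So it suffices to prove that $\phi^*$ is injective.

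I would in fact prove the stronger statement that the sum of inflations is injective already on the full groups $H^3(G_1,\bZ) \oplus H^3(G_2,\bZ)$. The key observation is that each projection $p_i \colon G_1 \times G_2 \to G_i$ has a canonical section $s_i \colon G_i \hookrightarrow G$ given by the inclusion of the $i$-th factor, and the composition $p_j \circ s_i$ is the identity for $i = j$ and the trivial homomorphism for $i \neq j$. Functoriality of cohomology then yields $\mathrm{res}_{s_i} \circ \mathrm{inf}_{p_i} = \mathrm{id}$ on $H^3(G_i,\bZ)$, while for $i \neq j$ the composite $\mathrm{res}_{s_j} \circ \mathrm{inf}_{p_i}$ factors through $H^3(\{1\},\bZ) = 0$. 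Thus if $\mathrm{inf}_{p_1}(\alpha_1) + \mathrm{inf}_{p_2}(\alpha_2) = 0$, applying $\mathrm{res}_{s_1}$ and $\mathrm{res}_{s_2}$ in turn forces $\alpha_1 = \alpha_2 = 0$.

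The step I expect to require the most care is the identification of $\phi^*$ with the sum of inflations: this amounts to checking that Tate's duality isomorphism is natural with respect to the field tower $K \subset L_i \subset L$, translating the inclusion $N_{L/K}(J_L) \subset N_{L_i/K}(J_{L_i})$ on the $\Sha$ side into inflation along $G \twoheadrightarrow G_i$ on the $H^3$ side. A small compatibility also has to be noted, namely that inflation sends $X^3(G_i)$ into $X^3(G)$; this is because each decomposition group $G^v \subset G$ projects onto the corresponding decomposition group $G_i^v \subset G_i$ under $p_i$, so inflation commutes with the restriction maps to local cohomology, and the injectivity of $\phi^*$ on the $X^3$-subgroups follows from its injectivity on the ambient $H^3 \oplus H^3$.
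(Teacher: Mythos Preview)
Your strategy and the paper's converge on the same endpoint: both reduce the surjectivity of $\phi$ to the injectivity of
\[
\mathrm{Inf}^G_{G_1} + \mathrm{Inf}^G_{G_2} \colon H^3(G_1,\bZ) \oplus H^3(G_2,\bZ) \longrightarrow H^3(G,\bZ),
\]
and your proof of that injectivity via $\mathrm{Res}_{s_j}\circ\mathrm{Inf}_{p_i} = \delta_{ij}\cdot\mathrm{id}$ is exactly the paper's final lemma.

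The difference lies in how the reduction is carried out. You invoke Tate's description $\Sha(F/K)\cong X^3(\Gal(F/K))^\vee$ and assert, as a ``functoriality argument'', that $\phi$ dualises to the sum of inflations. You rightly flag this as the delicate step. The paper does not treat this as a black box: it works directly with the exact sequence $1\to L^\times\to J_L\to C_L\to 1$, identifies the components of $\phi$ with the deflation maps $\mathrm{Def}^G_{G_j}$ in Tate cohomology, passes to $\hat H^{-3}(G,\bZ)$ via the Tate--Nakayama isomorphism using a compatibility theorem of Horie--Horie (deflation on $C_L$ corresponds to \emph{residuation} on $\bZ$), and finally proves a cup-product adjointness lemma showing that residuation is dual to inflation when $G=G_1\times G_2$. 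In other words, the paper's Lemmas on deflation/residuation and the Horie compatibility are precisely the content of the naturality you are assuming.

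So your outline is correct, but be aware that the ``most care'' step is not a routine bookkeeping exercise: it is the technical core of the argument, and the paper devotes an appendix to constructing the relevant maps and proving the adjointness. If you want to bypass that machinery, you would need an independent proof that Tate's pairing $\Sha(L/K)\times X^3(G)\to\bQ/\bZ$ is functorial for the tower $K\subset L_i\subset L$, which in practice unwinds to something very close to what the paper does.
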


Our proof relies on properties of the deflation and residuation maps
for the Tate cohomology groups, introduced in \cite{Weiss} and
\cite{Horie},  and their interaction with the fundamental
isomorphisms of class field theory. Since these maps are rarely
used, we briefly recall in the appendix their construction, which is
needed to prove the key Lemma \ref{L:5}.

Given a finite group $G$ and a $G$-module $A$, we let $\hat{H}^i(G ,
A)$ denote the $i$th Tate cohomology group (cf., for example,
\cite[Ch. IV, \S~6]{Cass}). For a normal subgroup $H$ of $G$ and any
$i \geqslant 0$, one can define the {\it deflation map}
$$
\mathrm{Def}^G_{G/H} \colon \hat{H}^{-i}(G , A) \to \hat{H}^{-i}(G/H ,
A^H).
$$
The deflation map is natural; in particular, it has the following
properties.
\begin{lemma}\label{L:3}
For any $G$-module homomorphism $f \colon A \rightarrow B$ and any
$i \geqslant 0$, the diagram
\[
\begin{CD}
\hat{H}^{-i}(G , A) @>>> \hat{H}^{-i}(G , B) \\
 @VV\mathrm{Def}^G_{G/H}V @VV\mathrm{Def}^G_{G/H}V \\
 \hat{H}^{-i}(G/H , A^H) @>>> \hat{H}^{-i}(G/H , B^H)
\end{CD}
\]
in which the horizontal maps are induced by $f$, is commutative.
\end{lemma}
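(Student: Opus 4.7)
The plan is to appeal directly to the explicit construction of the deflation map that will be recalled in the appendix, and to observe that this construction is manifestly functorial in the coefficient module. The key preliminary point is that since $f \colon A \to B$ is $G$-equivariant, it restricts to a map $f^H \colon A^H \to B^H$, and $f^H$ is automatically a $G/H$-module homomorphism. Thus both compositions in the diagram make sense, and it is a matter of checking that they agree.

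My first step would be to represent a class $\alpha \in \hat{H}^{-i}(G, A)$ by a chain $c$ (with values in $A$) on a fixed complete resolution of $\mathbb{Z}$ over $\mathbb{Z}[G]$. Applying the $G$-equivariant map $f$ pointwise sends $c$ to a chain $f \circ c$ with values in $B$, and because $f$ commutes with the group action, $f \circ c$ is a cycle (resp.\ boundary) exactly when $c$ is. Hence the horizontal map $\hat{H}^{-i}(G, A) \to \hat{H}^{-i}(G, B)$ is simply $[c] \mapsto [f \circ c]$.

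Next I would write out the deflation map in the form given in the appendix: it is built from a canonical chain map between complete resolutions for $G$ and $G/H$, followed by passage to $H$-invariants in the coefficients. The differentials of the resolutions involve only the group rings $\mathbb{Z}[G]$ and $\mathbb{Z}[G/H]$ and make no reference to $A$ or $B$; the coefficients enter only through the evaluation pairing, which is trivially natural in the module argument. Consequently, for a chain $c$ representing $\alpha$, the image $\mathrm{Def}^G_{G/H}(\alpha)$ is represented by a chain obtained from $c$ by purely group-theoretic operations, and applying $f^H$ to its values yields the same result as first applying $f$ to $c$ and then deflating.

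The only obstacle is notational bookkeeping: one must write the deflation formula in a form where the naturality in the coefficients is visually apparent, so that the two routes around the square can be compared termwise. Once the appendix description is in hand, there is no substantive difficulty, since the statement is an instance of the general principle that a construction defined by functorial operations on the coefficient module is automatically natural in that module. I would conclude by tracing an arbitrary class through both paths of the diagram and observing that the resulting cochains coincide, which gives the desired commutativity.
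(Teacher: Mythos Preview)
Your proposal is correct: naturality of the deflation map is indeed immediate from the explicit cochain-level formula, since that formula involves only evaluating the cochain at group-theoretic data and summing, operations that commute with postcomposition by any $G$-map $f \colon A \to B$. One small imprecision: the appendix does not literally present $\mathrm{Def}^G_{G/H}$ as precomposition with a chain map between resolutions, but rather as an explicit map on cochain groups $\mathrm{Hom}_G(X_{-i},A) \to \mathrm{Hom}_{G/H}(Y_{-i},A^H)$; still, your conclusion stands because that explicit formula is visibly natural in $A$.

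The paper itself does not argue this at all; it simply records that the statement is Proposition~8 in \cite{Weiss}. Your approach thus supplies what the paper outsources: a self-contained verification using only the material already in the appendix. The advantage of the citation is brevity; the advantage of your argument is that the reader need not consult an external reference for what is, as you say, a routine functoriality check.
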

\begin{proof}
This is Proposition 8 in \cite{Weiss}.
\end{proof}

\begin{lemma}\label{L:4}
Let
\begin{equation}\label{E:ES1}
0 \rightarrow A \rightarrow B \rightarrow C \rightarrow 0
\end{equation}
be an exact sequence of $G$-modules, and assume that the induced
sequence of $G/H$-modules
\begin{equation}\label{E:ES2}
0 \rightarrow A^H \rightarrow B^H \rightarrow C^H \rightarrow 0
\end{equation}
is also exact. Then for any $i \geqslant 1$ the diagram
\[
\begin{CD}
 \hat{H}^{-i}(G , C) @>>> \hat{H}^{-i+1}(G , A)  \\
 @VV\mathrm{Def}^G_{G/H}V @VV\mathrm{Def}^G_{G/H}V  \\
 \hat{H}^{-i}(G/H,C^H) @>>> \hat{H}^{-i+1}(G/H,A^H)
\end{CD}
\]
in which the horizontal maps are the coboundary maps arising from
the exact sequences \eqref{E:ES1} and \eqref{E:ES2}, is commutative.
\end{lemma}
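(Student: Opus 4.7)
The statement is a naturality assertion: the deflation map, being defined in all degrees, should commute with the connecting homomorphism of any short exact sequence to which it applies on both sides. The hypothesis that \eqref{E:ES2} remains exact is precisely what ensures that the connecting map on the $G/H$ side is defined from the $H$-invariants via the snake lemma.

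My plan is to argue at the cochain level. Tate cohomology $\hat{H}^*(G, -)$ is computed as the cohomology of $\mathrm{Hom}_G(\hat{P}_\bullet, -)$, where $\hat{P}_\bullet$ is the complete standard resolution of $\bZ$ over $\bZ[G]$, with the analogous description for $G/H$. As recalled in the appendix (following \cite{Weiss} and \cite{Horie}), the deflation map is induced by an explicit equivariant chain map between these two resolutions. The connecting map coming from \eqref{E:ES1} is given by the snake lemma applied to the short exact sequence of cochain complexes
\[ 0 \to \mathrm{Hom}_G(\hat{P}_\bullet, A) \to \mathrm{Hom}_G(\hat{P}_\bullet, B) \to \mathrm{Hom}_G(\hat{P}_\bullet, C) \to 0, \]
which is short exact because each $\hat{P}_n$ is $\bZ[G]$-free. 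The assumed exactness of \eqref{E:ES2} produces the corresponding short exact sequence of cochain complexes at the $G/H$ level.

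With these identifications, the commutativity of the diagram reduces to a formal diagram chase: pick a cochain representative of a class in $\hat{H}^{-i}(G, C)$, lift it to a cochain with values in $B$, take its coboundary (which automatically lands in the $A$-subcomplex), and compare the deflation of this result with what one obtains by first deflating to $C^H$, lifting to $B^H$, and applying the $G/H$-coboundary. Since the deflation is functorial in the module argument — which is essentially the content of Lemma \ref{L:3} — it intertwines the snake-lemma constructions on the two sides, and the comparison succeeds.

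The main technical obstacle is that the description of the deflation in negative degrees (via the residuation maps of \cite{Horie}) is intricate, so some care is required in writing out the cochain-level chase and in verifying that the required chain map at the level of complete standard resolutions really does commute with the module-level inclusions and projections appearing in \eqref{E:ES1} and \eqref{E:ES2}. Once the appendix makes the construction explicit, however, the verification is routine, and the result can in fact be viewed as an instance of the general principle that any natural transformation between cohomological $\delta$-functors commutes with connecting homomorphisms.
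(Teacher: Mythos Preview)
Your sketch is correct in spirit: once one knows that the deflation maps $\delta_{-i}$ on cochains (as described in the appendix) commute with the differentials and are natural in the module variable (the latter being Lemma~\ref{L:3}), the deflation becomes a morphism between the two short exact sequences of cochain complexes, and the naturality of the long exact sequence yields the commutative square. One small point worth flagging: for $i = 1$ the target is $\hat{H}^0$, where (as the appendix notes) the deflation is given by an ad hoc definition rather than by $\delta_{-i}$, so this boundary case requires a separate check that the explicit description there is compatible with the snake-lemma recipe.

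The paper, however, does not carry out any of this: its proof of Lemma~\ref{L:4} is simply a citation to Proposition~4 of Weiss~\cite{Weiss}. So your proposal is not so much a different route as an outline of the direct verification that the cited reference performs; it is a reasonable and essentially complete sketch of that argument.
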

\begin{proof}
This is Proposition 4 in \cite{Weiss}.
\end{proof}

Our proof also makes use of the {\it residuation map}
$\mathrm{Rsd}^G_H$ -- see the appendix. The key property that we
need is that in the case of interest to us, the residuation map is
the dual of the usual inflation map. More precisely, we have the
following.
\begin{lemma}\label{L:5}
Let $G=H \times K$ and identify $G/K$ with $H$.  Then for $i \geq 2$
the residuation and inflation maps in the following diagram
\[
\begindc{\commdiag}[5]
\obj(0,10)[a]{$\hat{H}^{-i}(G,\bZ)$}
\obj(14,10)[b]{$\hat{H}^{i}(G,\bZ)$}
\obj(0,0)[c]{$\hat{H}^{-i}(H,\bZ)$}
\obj(14,0)[d]{$\hat{H}^{i}(H,\bZ)$} \obj(7,0){$\times$}
\obj(7,10){$\times$} \obj(28,10)[e]{$\hat{H}^{0}(G,\bZ)$}
\obj(28,0)[f]{$\hat{H}^{0}(H,\bZ)$} \mor{a}{c}{\rm $\mathrm{Rsd}^G_H$}
\mor{d}{b}{\rm $\mathrm{Inf}^G_H$} \mor{f}{e}{\rm $\mathrm{Cor}^G_H$} \mor{b}{e}{$\cup$}
\mor{d}{f}{$\cup$}
\enddc
\]
are adjoint with respect to the pairings given by the
$\cup$-products. That is,
\[f \cup \mathrm{Inf}^G_H(\psi) = \mathrm{Cor}^G_H(\mathrm{Rsd}^G_H(f) \cup \psi)\]
for every $f \in \hat{H}^{-i}(G,\bZ)$ and $\psi \in \hat{H}^{i}(H,\bZ)$.
\end{lemma}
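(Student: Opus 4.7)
\noindent \emph{Proof plan.} The plan is to combine the cochain-level definition of the residuation map (as given in the appendix) with the standard projection formula for cup products and corestriction in Tate cohomology, and to pass to arbitrary degree via dimension shifting. The crucial structural feature throughout is that $G = H \times K$, so the projection $p \colon G \to H$ is split by the inclusion $\iota \colon H \hookrightarrow G$; in particular $\mathrm{Res}^G_H \circ \mathrm{Inf}^G_H = \mathrm{id}$ on $\hat{H}^i(H, \bZ)$, which is the one piece of structure distinguishing this situation from a general normal subgroup.

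First, I would record the projection formula
\[
\mathrm{Cor}^G_H\bigl(\mathrm{Res}^G_H(\alpha) \cup \beta\bigr) = \alpha \cup \mathrm{Cor}^G_H(\beta),
\]
which holds in all Tate degrees, and then unpack the appendix's cochain-level description of $\mathrm{Rsd}^G_H$. In the formulations of \cite{Weiss} and \cite{Horie}, residuation is constructed from a morphism of projective resolutions chosen to be adjoint to inflation at the cochain level. With both sides of the claimed identity rewritten in terms of these chain-level maps, I expect the adjointness to drop out by combining the projection formula with a direct cochain manipulation exploiting $\mathrm{Res}^G_H \circ \mathrm{Inf}^G_H = \mathrm{id}$.

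To reach all $i \geq 2$, I would reduce to a small base case by dimension shifting, using the Tate long exact sequences associated to $0 \to I_G \to \bZ[G] \to \bZ \to 0$ and its restriction to $H$. Cup products, inflation, and corestriction commute with the coboundary maps by standard naturality, so the one remaining ingredient is the analog of Lemma \ref{L:4} for residuation: that $\mathrm{Rsd}^G_H$ commutes with coboundaries. I expect this to be the main technical obstacle, but it should follow more or less directly from the chain-level definition of residuation supplied in the appendix. Once the compatibility with coboundaries is in hand, the induction on $i$ reduces the claim to a base case (say $i = 2$), where $\hat{H}^{-2}(G, \bZ) = G^{\mathrm{ab}}$ by Hopf, and one can verify the identity by a direct calculation exploiting the K\"unneth decomposition of $\hat{H}^\ast(G, \bZ)$ afforded by $G = H \times K$.
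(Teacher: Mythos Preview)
Your approach diverges substantially from the paper's, and the plan as written has a gap at its key step.

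The paper does not use the projection formula, restriction, dimension shifting, or any base-case/induction scheme. It gives a single direct cochain computation valid for all $i \geq 2$ at once: represent $\bar f$ and $\bar\psi$ by explicit cocycles in the standard complex, write out the cochain-level formulas for $\mathrm{Rsd}$, $\mathrm{Def}$, $\mathrm{Inf}$, and the cup product from the appendix, and compare. The only non-formal ingredient is the identity $|K|\cdot\mathrm{Rsd}^G_H = \mathrm{Def}^G_H$ on $\hat H^{-i}(G,\bZ)$ (trivial coefficients), combined with the fact that $\mathrm{Cor}^G_H$ on $\hat H^0$ is multiplication by $[G:H] = |K|$. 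These two factors of $|K|$ interact to convert $\mathrm{Cor}(\mathrm{Rsd}(\bar f)\cup\bar\psi)$ into a sum over $H\times K = G$ that is visibly the cochain representing $\bar f\cup\mathrm{Inf}(\bar\psi)$.

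The gap in your plan is the sentence ``I expect the adjointness to drop out by combining the projection formula with a direct cochain manipulation exploiting $\mathrm{Res}^G_H\circ\mathrm{Inf}^G_H=\mathrm{id}$.'' The projection formula relates $\mathrm{Cor}$ to $\mathrm{Res}$, not to $\mathrm{Rsd}$, and you give no mechanism for passing from one to the other; $\mathrm{Res}^G_H$ and $\mathrm{Rsd}^G_H$ are genuinely different maps on $\hat H^{-i}(G,\bZ)$ even when $G = H\times K$. The identity $\mathrm{Res}\circ\mathrm{Inf}=\mathrm{id}$ is true but is not used in the paper's proof and does not by itself connect $\mathrm{Rsd}$ to anything appearing in the projection formula. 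Your dimension-shifting reduction to $i=2$ could in principle be salvaged, but it would require first proving compatibility of $\mathrm{Rsd}$ with the connecting homomorphism (a lemma you do not have) and then a separate explicit computation in degree $-2$; that is strictly more work than the paper's uniform argument, which is about ten lines once the cochain-level definitions are written down.
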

\vskip2mm

\begin{proof} This uses an explicit construction of the residuation map
and will be given in the appendix. \end{proof}

\vskip2mm

Another critical ingredient of the proof of Proposition \ref{P:3} is
the following result of K.~Horie and M.~Horrie \cite{Horie} that
shows how the deflation and residuation maps interact with the
isomorphisms from class field theory. For a global field $K$, we let
$C_K = J_K/K^{\times}$ denote the idele class group. Furthermore,
given a Galois extension $F/K$ of global fields, for any
$\mathrm{Gal}(F/K)$-module $A$ we write $\hat{H}(F/K , A)$ instead
of $\hat{H}(\mathrm{Gal}(F/K) , A)$, and then for any $i \in
\mathbb{Z}$ there is a canonical isomorphism $\Phi_F \colon
\hat{H}^{i-2}(F/K , \mathbb{Z}) \to \hat{H}^i(F/K , C_F)$ called the
{\it Tate isomorphism} (cf. \cite{Cass}, Ch. VII).
\begin{lemma}\label{L:6}
{\rm (\cite{Horie}, Theorem 1)} Let $E \subset F$ be Galois
extensions of a global field $K$. Then for any $i \geqslant 0$, the
following diagram
\begin{equation}\label{E:Tate}
\begin{CD}
\hat{H}^{-i-2}(F/K , \bZ) @>\Phi_F>>\hat{H}^{-i}(F/K , C_F)  \\
 @VV\mathrm{Rsd}^{\mathrm{Gal}(F/K)}_{\mathrm{Gal}(E/K)} V @VV\mathrm{Def}^{\mathrm{Gal}(F/K)}_{\mathrm{Gal}(E/K)}V  \\
\hat{H}^{-i-2}(E/K , \bZ) @>\Phi_E>> \hat{H}^{-i}(E/K , C_E)
\end{CD}
\end{equation}
commutes.
\end{lemma}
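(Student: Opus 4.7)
\emph{Plan.} Write $G = \Gal(F/K)$ and $H = \Gal(F/E)$, so that $G/H \cong \Gal(E/K)$. The Tate isomorphisms $\Phi_F$ and $\Phi_E$ are given by cup product with the fundamental classes $u_{F/K} \in H^2(G, C_F)$ and $u_{E/K} \in H^2(G/H, C_E)$, respectively; via the canonical isomorphism $C_E \cong C_F^H$ coming from the vanishing $\hat{H}^{-1}(H, C_F) = 0$ of class field theory, we may view $u_{E/K}$ inside $H^2(G/H, C_F^H)$. The commutativity of the diagram in Lemma~\ref{L:6} thereby reduces to the single identity
\[ \mathrm{Def}^{G}_{G/H}(x \cup u_{F/K}) = \mathrm{Rsd}^{G}_{G/H}(x) \cup u_{E/K} \quad \text{for all } x \in \hat{H}^{-i-2}(G, \bZ). \]

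I would split the verification of this identity into two ingredients. The first is a general ``projection formula'' of the shape
\[ \mathrm{Def}^{G}_{G/H}(x \cup \beta) = \mathrm{Rsd}^{G}_{G/H}(x) \cup \beta' \]
in cup products, valid whenever $\beta$ admits a natural $G/H$-cohomological ``shadow'' $\beta'$. This is the two-variable cup-product analog of the adjointness in Lemma~\ref{L:5}, and its proof proceeds at the cochain level using the explicit construction of the residuation map recalled in the appendix. The second ingredient is the standard class-field-theoretic fact that, under the identification $C_F^H \cong C_E$, the fundamental class $u_{F/K}$ matches $u_{E/K}$ in the precise sense required by the projection formula; this follows from the local-global characterization of each fundamental class by its images in the cohomology of decomposition groups together with the naturality of the local invariant maps under passage to subextensions.

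The main obstacle is establishing the projection formula. Because deflation is defined only in non-positive degrees while $u_{F/K}$ lies in degree $2$, one cannot manipulate cocycles for the fundamental class directly. To circumvent this, I would resort to dimension-shifting: embed $C_F$ into a cohomologically trivial $G$-module (for instance, a coinduced module), transport $u_{F/K}$ into non-positive degree through the iterated coboundary isomorphisms, verify the projection formula there using the explicit description of residuation from the appendix, and finally propagate the resulting identity back up to degree $2$ by invoking the compatibility of deflation with coboundary maps provided by Lemma~\ref{L:4}. Once both the projection formula and the fundamental-class compatibility are in place, the commutativity of \eqref{E:Tate} follows by direct substitution.
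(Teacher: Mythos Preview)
The paper does not prove Lemma~\ref{L:6}; it simply quotes it as Theorem~1 of Horie--Horie \cite{Horie}. So there is no in-paper argument to compare against. Your outline---reduce the diagram to a single projection-type identity relating $\mathrm{Def}$ and $\mathrm{Rsd}$ under cup product with the fundamental class, then invoke the tower compatibility of fundamental classes---is the standard route and is, in substance, what \cite{Horie} does.

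Two points where your sketch needs tightening. First, the ``shadow'' of $u_{F/K}$ is not literally $u_{E/K}$: pushing $\mathrm{Inf}^{G}_{G/H}(u_{E/K})$ forward along $C_F^{H}\hookrightarrow C_F$ yields $\vert H\vert\cdot u_{F/K}$, not $u_{F/K}$ itself (compare invariants). The extra factor $\vert H\vert$ is precisely what is absorbed by the relation $\vert H\vert\cdot\mathrm{Rsd}^{G}_{G/H}=\mathrm{Def}^{G}_{G/H}$ on trivial coefficients recorded in the appendix (equation \eqref{E:Rsd}); your write-up should make this bookkeeping explicit rather than leave it inside the phrase ``matches $u_{E/K}$ in the precise sense required.'' Second, the dimension-shifting you propose is the correct device, but you will also need the compatibility of $\mathrm{Rsd}$ (not just $\mathrm{Def}$) with the connecting homomorphisms; this is the analogue of Lemma~\ref{L:4} for residuation and is proved in \cite{Horie}. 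Once both of these are in place, the argument you describe goes through.
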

(We will only use this lemma for $i = 1$.)

\vskip5mm

\noindent {\it Proof of Proposition \ref{P:3}.} For a finite Galois
extension $F/K$, we let
$$
\kappa_F \colon \hat{H}^0(F/K , F^{\times}) \to \hat{H}^0(F/K , J_F)
$$
denote the map induced by the inclusion $F^{\times}  \to J_F$. Then
clearly $\Sha(F/K) = \mathrm{Ker}\: \kappa_F$. Now, let $G_j =
\mathrm{Gal}(L_j/K)$ for $j = 1, 2$. Since $L_1$ and $L_2$ are
assumed to be linearly disjoint, for $L = L_1L_2$ and $G =
\mathrm{Gal}(L/K)$ there is a natural isomorphism
$$
G = G_1 \times G_2,
$$
which in particular allows us to identify $G/G_{3-j}$ with $G_j$ for
$j = 1, 2$. Considering the inclusion $L^{\times} \to J_L$ as part
of the exact sequence of $G$-modules $1 \to L^{\times} \to J_L \to
C_L \to 1$ and applying Lemmas \ref{L:3} and \ref{L:4} to $H =
G_{3-j}$ with $i = 1$ we obtain (observing that the corresponding
sequence \eqref{E:ES2} is $1 \to L^{\times}_j \to J_{L_j} \to
C_{L_j} \to 1$, cf. \cite[Ch. VII, Prop. 8.1]{Cass}) the following
commutative diagram with exact rows:
\begin{equation}\label{E:diagr1}
\begin{CD}
\hat{H}^{-1}(G,C_L) @>>>\hat{H}^{0}(G,L^\times) @>\kappa_L>>\hat{H}^{0}(G,J_L)  \\
 @VV\mbox{Def}^G_{G_j}V @VV\mbox{Def}^G_{G_j}V @VV\mbox{Def}^G_{G_j}V \\
 \hat{H}^{-1}(G_j,C_{L_j}) @>>> \hat{H}^{0}(G_j,L_j^\times) @>\kappa_{L_j}>> \hat{H}^{0}(G_j,J_{L_j})
\end{CD}
\end{equation}
for each $j = 1, 2$. Since the deflation map in dimension $0$ is
induced by the identity map (cf. the appendix), we see that the map
$\phi$ in Proposition \ref{P:3} is the map $\mathrm{Ker}(\kappa_L)
\rightarrow \mathrm{Ker}(\kappa_{L_1}) \times \mathrm{Ker}(\kappa_{L_2})$
induced by $\mathrm{Def}^G_{G_1} \times \mathrm{Def}^G_{G_2}$. So,
it follows from \eqref{E:diagr1} that $\phi$ is surjective if
\begin{equation}\label{E:Defl}
\mathrm{Def}^G_{G_1} \times \mathrm{Def}^G_{G_2} \colon
\hat{H}^{-1}(G , C_L) \to \hat{H}^{-1}(G_1 , C_{L_1}) \times
\hat{H}^{-1}(G_2 , C_{L_2})
\end{equation}
is such. Now, using Lemma \ref{L:6} with $i = 1$, we obtain the
following commutative diagram
\[
\begin{CD}
\hat{H}^{-3}(G,\bZ) @>\Phi_L>> \hat{H}^{-1}(G,C_L) \\
@VV\mathrm{Rsd}^G_{G_j}V   @VV\mathrm{Def}^G_{G_j}V\\
\hat{H}^{-3}(G_j,\bZ) @>\Phi_{L_j}>> \hat{H}^{-1}(G_j,C_{L_j})
\end{CD}
\]
for each $j = 1, 2$. So, the surjectivity of \eqref{E:Defl} is
equivalent to that of
\begin{equation}\label{E:Rsd}
\mathrm{Rsd}^G_{G_1} \times \mathrm{Rsd}^G_{G_2} \colon
\hat{H}^{-3}(G , \bZ) \to \hat{H}^{-3}(G_1 , \bZ) \times
\hat{H}^{-3}(G_2 , \bZ).
\end{equation}
For this, we will use the duality between the residuation and
inflation maps provided by Lemma \ref{L:5}. More precisely, it is
well-known (cf., for example, \cite[Theorem 6.6, p.~250]{CE}) that
for any finite group $H$ and any $i \in \bZ$, the $\cup$-product
defines a perfect pairing
$$
\alpha_H \colon \hat{H}^{-i}(H , \bZ) \times \hat{H}^{i}(H , \bZ)
\to \hat{H}^0(H , \bZ) = \bZ/\vert H \vert\bZ.
$$
On the other hand, in our situation, $\mathrm{Cor}^G_{G_j}$
identifies $H^0(G_j , \bZ) = \bZ/\vert G_j \vert\bZ$ with
$$
\vert G_{3-j}\vert\bZ /\vert G \vert \bZ \subset \bZ /\vert G \vert
\bZ = \hat{H}^0(G , \bZ).
$$
It follows that $\alpha = \mathrm{Cor}^G_{G_1} \circ \alpha_{G_1} +
\mathrm{Cor}^G_{G_2} \circ \alpha_{G_2}$ defines a perfect pairing
$$
(\hat{H}^{-i}(G_1 , \bZ) \times \hat{H}^{-i}(G_2 , \bZ)) \times
(\hat{H}^i(G_1 , \bZ) \times \hat{H}^i(G_2 , \bZ)) \to H^0(G , \bZ).
$$
Furthermore, by Lemma \ref{L:5}, we have the following commutative
diagram

\[
\begindc{\commdiag}[5]
\obj(0,0)[c]{$(\hat{H}^{-3}(G_1 , \bZ) \times \hat{H}^{-3}(G_2 , \bZ))$}
\obj(36,0)[d]{$(\hat{H}^3(G_1 , \bZ) \times \hat{H}^3(G_2 , \bZ))$}
\obj(0,10)[a]{$\hat{H}^{-3}(G , \bZ)$}
\obj(36,10)[b]{$H^3(G , \bZ)$} \obj(18,0){$\times$}
\obj(18,10){$\times$}
\obj(62,5)[e]{$\hat{H}^{0}(G,\bZ)$}
\mor{a}{c}{$\mathrm{Rsd}^G_{G_1} \times \mathrm{Rsd}^G_{G_2}$}[-1,0]
\mor{d}{b}{$\mathrm{Inf}^G_{G_1} + \mathrm{Inf}^G_{G_2} $} \mor{b}{e}{$\cup$}
\mor{d}{e}{$\alpha$}
\enddc
\]
Thus, the surjectivity of \eqref{E:Rsd} is equivalent to the
injectivity of $\mathrm{Inf}^G_{G_1} + \mathrm{Inf}^G_{G_2}$, and
the proof of the proposition is completed by the following
statement.
\begin{lemma}
For any finite group $G$ of the form $G = G_1 \times G_2$ and any $i
\geqslant 1$, the map
$$
\mathrm{Inf}^G_{G_1} + \mathrm{Inf}^G_{G_2} \colon \hat{H}^i(G_1 ,
\bZ) \times \hat{H}(G_2 , \bZ) \to \hat{H}^i(G , \bZ)
$$
is injective.
\end{lemma}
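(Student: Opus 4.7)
The plan is to exploit the product structure $G = G_1 \times G_2$ by using the fact that each factor inclusion $G_j \hookrightarrow G$ together with the quotient projection $G \twoheadrightarrow G/G_{3-j} \cong G_j$ gives a retraction of $G_j$ onto itself. This means that restriction will serve as a right inverse (up to vanishing cross-terms) to inflation, which is enough to kill any potential kernel.

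More precisely, the first step is to verify two functoriality identities. For $j \in \{1,2\}$, the composition
\[
G_j \;\hookrightarrow\; G \;\twoheadrightarrow\; G/G_{3-j} \;=\; G_j
\]
is the identity, so by functoriality of inflation and restriction we have $\mathrm{Res}^G_{G_j} \circ \mathrm{Inf}^G_{G_j} = \mathrm{id}$ on $\hat{H}^i(G_j , \bZ)$. On the other hand, the composition
\[
G_j \;\hookrightarrow\; G \;\twoheadrightarrow\; G/G_j \;=\; G_{3-j}
\]
is the trivial homomorphism, so it factors through the trivial group; since $\hat{H}^i(\{e\} , \bZ) = 0$ for $i \geqslant 1$, we conclude that $\mathrm{Res}^G_{G_j} \circ \mathrm{Inf}^G_{G_{3-j}} = 0$ on $\hat{H}^i(G_{3-j},\bZ)$ for all $i \geqslant 1$.

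The second step is to deduce injectivity. Suppose $(\alpha_1 , \alpha_2) \in \hat{H}^i(G_1 , \bZ) \times \hat{H}^i(G_2 , \bZ)$ satisfies
\[
\mathrm{Inf}^G_{G_1}(\alpha_1) + \mathrm{Inf}^G_{G_2}(\alpha_2) = 0
\]
in $\hat{H}^i(G , \bZ)$. Applying $\mathrm{Res}^G_{G_j}$ to both sides and using the two identities from the previous step, we obtain $\alpha_j = 0$ for $j = 1, 2$, proving the required injectivity.

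There is essentially no obstacle here: the whole argument is purely formal functoriality of inflation and restriction, together with the observation that the diagonal/antidiagonal compositions of inclusion and projection in a direct product are the identity and the trivial map, respectively. The hypothesis $i \geqslant 1$ is used only to ensure the vanishing of $\hat{H}^i$ of the trivial group, which is what forces the off-diagonal terms $\mathrm{Res}^G_{G_j} \circ \mathrm{Inf}^G_{G_{3-j}}$ to vanish.
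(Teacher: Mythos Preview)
Your proof is correct and follows exactly the same approach as the paper: use $\mathrm{Res}^G_{G_j}$ as a left inverse to $\mathrm{Inf}^G_{G_j}$ and observe that the cross-terms $\mathrm{Res}^G_{G_j}\circ\mathrm{Inf}^G_{G_{3-j}}$ vanish. You have simply spelled out in more detail why the cross-terms are zero (factoring through $\hat{H}^i(\{e\},\bZ)=0$ for $i\geqslant 1$), which the paper leaves implicit.
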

\begin{proof}
For a subgroup $H \subset G$, we let $\mathrm{Res}^G_H \colon
\hat{H}^i(G , \bZ) \to \hat{H}^i(H , \bZ)$ denote the corresponding
restriction map. Identifying $G/G_{3-j}$ with $G_j$ as above, it is
easy to see that the composition
$$
\mathrm{Res}^G_{G_j} \circ \mathrm{Inf}^G_{G_j} \colon
\hat{H}^i(G_j , \bZ) \to \hat{H}^i(G_j , \bZ)
$$
is the identity map, while the composition $\mathrm{Res}^G_{G_{3-j}}
\circ \mathrm{Inf}^G_{G_j}$ is zero, and our assertion follows.
\end{proof}

\vskip2mm

\noindent \textbf{Remark.} We note that the deflation map in the
context of Tate-Shafarevich groups and its connection with the
inflation map was used in \cite[p.~97]{Opolka} for a different
purpose.

\section{Examples and Extensions}

In this section we give examples where the multinorm principle fails
and prove some results that compliment and extend the main theorem.

\vskip2mm

\noindent \textbf{Example 1.} {\it For non-Galois extensions, the
condition $L_1 \cap L_2 = K$ may not imply the multinorm principle
for the pair $L_1 , L_2$.} Indeed, let $F/K$ be a Galois extension
with Galois group $G = \mathrm{Gal}(F/K)$ isomorphic to $A_6$ as in
Lemma 2 of \cite{PlD1}, and let $H$ be a subgroup of $G$ of index 10
(see {\it loc. cit.} or \cite{PlR}, p. 311). Since $A_6$ is simple,
we can choose $\sigma \in G$ such that $\sigma H \sigma^{-1} \neq
H$. Set
$$
L_1 = F^H \ \ \text{and} \ \ L_2 = F^{\sigma H \sigma^{-1}} =
\sigma(L_1).
$$
Clearly, $A_6$ does not have any subgroups of index $2$ or $5$, so
$\langle H \, , \, \sigma H \sigma^{-1} \rangle = G$ and therefore
\begin{equation}\label{E:intersection}
L_1 \cap L_2 = K.
\end{equation}
On the other hand, since $L_1$ and $L_2$ are Galois-conjugate over
$K$, we have
$$
N_{L_1/K}(L^{\times}_1) = N_{L_2/K}(L^{\times}_2) \ \ \text{and} \ \
N_{L_1/K}(J_{L_1}) = N_{L_2/K}(J_{L_2}).
$$
This means that the multinorm principle for the pair $L_1 , L_2$ is
equivalent to the Hasse norm principle for $L_1/K$. However,
according to Theorem 1 of \cite{PlD1}, the latter actually fails for
$L_1/K$. Thus, the pair $L_1 , L_2$ does not satisfy the Hasse norm
principle despite (\ref{E:intersection}). \hfill $\Box$

\vskip2mm

We note that the extensions $L_1$ and $L_2$ in Example 1 are not
linearly disjoint. However, even for linearly disjoint extensions
$L_1 , L_2$ their Galois closures $E_1$ and $E_2$ need not satisfy
$E_1 \cap E_2 = K$ (e.g. for the linearly disjoint extensions $L_1 =
\mathbb{Q}(\sqrt[3]{5})$ and $L_2 = \mathbb{Q}(\sqrt[3]{7})$ of
$\mathbb{Q}$, we have $E_1 \cap E_2 = \mathbb{Q}(\zeta_3)$ where
$\zeta_3$ is a primitive 3rd root of unity), which is required to
apply our Main Theorem. So, the question of whether any pair $L_1 ,
L_2$ of linearly disjoint extensions of $K$ satisfies the multinorm
principle remains open.

On the other hand, it would be interesting to analyze the multinorm
principle for at least pairs of Galois extensions $L_1 , L_2$ such
that $L_1 \cap L_2 \neq K$. This case is not well-understood as of
now, but the following proposition clarifies the nature of
additional conditions one needs to impose to avoid obvious
counter-examples.
\begin{proposition}
Let $L_1$ and $L_2$ be finite Galois extensions of $K$ satisfying
$L_1 \cap L_2 = K$, and let $L_3$ be any finite extension of $L_1$.
If $L_1/K$ fails to satisfy the norm principle, then the pair
$L_1L_2 , L_3$ fails to satisfy the multinorm principle.
\end{proposition}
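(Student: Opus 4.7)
The plan is to reduce the failure of the multinorm principle for $(L_1L_2\, ,\, L_3)$ directly to the failure of the Hasse norm principle for $L_1/K$, by using Proposition~\ref{P:3} to produce a witness inside $\Sha(L_1L_2/K)$. Set $M = L_1L_2$. The key structural observation is that $L_1$ sits inside both $M$ and $L_3$, so transitivity of the norm gives
\[
N_{M/K}(M^\times) \subset N_{L_1/K}(L_1^\times), \qquad N_{L_3/K}(L_3^\times) \subset N_{L_1/K}(L_1^\times),
\]
and likewise $N_{M/K}(J_M) \subset N_{L_1/K}(J_{L_1})$. In particular,
\[
N_{M/K}(M^\times)\, N_{L_3/K}(L_3^\times) \subset N_{L_1/K}(L_1^\times),
\]
so the ``field side'' of the putative multinorm equality for $(M,L_3)$ is automatically trapped inside the field norms from $L_1$.

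Next, I would invoke Proposition~\ref{P:3}: since $L_1$ and $L_2$ are Galois over $K$ with $L_1 \cap L_2 = K$, the map
\[
\phi \colon \Sha(M/K) \longrightarrow \Sha(L_1/K) \times \Sha(L_2/K)
\]
is surjective. Because $L_1/K$ fails to satisfy the Hasse norm principle, $\Sha(L_1/K) \neq 1$; pick a nontrivial element $\bar{\alpha} \in \Sha(L_1/K)$, and lift $(\bar{\alpha} , 1) \in \Sha(L_1/K) \times \Sha(L_2/K)$ through $\phi$ to a class represented by some $a \in K^\times \cap N_{M/K}(J_M)$. By the choice of lift, $a \notin N_{L_1/K}(L_1^\times)$.

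Now observe $a = a \cdot 1 \in N_{M/K}(J_M)\, N_{L_3/K}(J_{L_3})$, so $a$ does belong to the ``idelic side'' of the multinorm principle for $(M,L_3)$. If that principle held, one would have $a \in N_{M/K}(M^\times)\, N_{L_3/K}(L_3^\times)$, and hence by the first paragraph $a \in N_{L_1/K}(L_1^\times)$, contradicting the construction of $a$. Therefore $(M,L_3)$ fails to satisfy the multinorm principle. The only nontrivial ingredient in this argument is Proposition~\ref{P:3}, which carries all the cohomological weight; the rest is a short bookkeeping argument exploiting the common subfield $L_1 \subset M \cap L_3$, so there is essentially no further obstacle.
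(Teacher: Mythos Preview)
Your proposal is correct and follows essentially the same route as the paper: both use Proposition~\ref{P:3} to produce an element $a \in K^\times \cap N_{L_1L_2/K}(J_{L_1L_2})$ with $a \notin N_{L_1/K}(L_1^\times)$, and then conclude via the containment $N_{L_1L_2/K}((L_1L_2)^\times)N_{L_3/K}(L_3^\times) \subset N_{L_1/K}(L_1^\times)$. Your write-up merely adds explicit mention of norm transitivity and lifts the specific pair $(\bar\alpha,1)$, whereas the paper uses only the surjectivity onto the first factor $\Sha(L_1/K)$; these are cosmetic differences.
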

\begin{proof}
It follows from Proposition \ref{P:3} that the natural homomorphism
\[ \Sha(L_1L_2/K) \rightarrow \Sha(L_1/K) \] is  surjective.
Since $\Sha(L_1/K)$ is non-trivial, this means that there exists $x
\in K^\times \cap N_{L_1L_2/K}(J_{L_1L_2})$ that is not in
$N_{L_1/K}(L_1^\times)$. Then $x$ lies in $K^\times \cap
N_{L_1L_2/K}(J_{L_1L_2})N_{L_3/K}(J_{L_3})$, but cannot be contained
in $N_{L_1L_2/K}((L_1L_2)^\times)N_{L_3/K}(L_3^\times) \subseteq
N_{L_1/K}(L_1^\times)$.
\end{proof}

\vskip3mm

Based on the (negative) result of the proposition, we would like to
propose the following.

\vskip2mm

\noindent \textbf{Conjecture.} {\it Let $L_1$ and $L_2$ be finite
Galois extensions of $K$. If every extension $P$ of $K$ contained in
$L_1 \cap L_2$ satisfies the norm principle then the pair $L_1 ,
L_2$ satisfies the multinorm principle. (It may be enough to require
that only the intersection $L_1 \cap L_2$ satisfies the norm
principle.)}

\vskip2mm

We note that, if proved, this conjecture would imply that a pair
$L_1 , L_2$ of finite Galois extensions of $K$ satisfies the
multinorm whenever the intersection $L_1 \cap L_2$ is a cyclic
extension of $K$.

\vskip3mm

Next, we would like to point out that in some simple cases the Main
Theorem can be proved without any use of group cohomology. The first
such instance is when both extensions are biquadratic.
\begin{proposition}
Let $L_1$ and $L_2$ be biquadratic extensions of $K$ satisfying $L_1
\cap L_2 = K$. Then the pair $L_1 , L_2$ satisfies the multinorm
principle.
\end{proposition}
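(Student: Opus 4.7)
The strategy is to verify condition (4) of Proposition \ref{P:2}, i.e., that $S \subseteq T$ where $S = K^\times \cap N_{L/K}(J_L)$ and $T = N_{L_1/K}(L_1^\times) \cdot N_{L_2/K}(L_2^\times)$ (with $L = L_1L_2$), without invoking the general Proposition \ref{P:3}. The plan is to apply H\"urlimann's multinorm principle (Proposition 6.11 of \cite{PlR}, which holds whenever one of the two extensions satisfies the Hasse norm principle -- in particular whenever it is cyclic) twice, in a tower.

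Fix a quadratic subfield $F \subseteq L_2$; then $F/K$ is cyclic and $F \cap L_1 = K$. For any $a \in S$ we have $a \in K^\times \cap N_{L_1/K}(J_{L_1}) \cdot N_{F/K}(J_F)$, since $L_1, F \subseteq L$, so H\"urlimann applied to the pair $(L_1, F)$ yields $a = N_{L_1/K}(u) \cdot N_{F/K}(w)$ with $u \in L_1^\times$ and $w \in F^\times$. It then suffices to show $N_{F/K}(w) \in T$. For this we apply H\"urlimann a second time, now with base field $F$, to the pair $(L_2/F, L_1F/F)$: here $L_2/F$ is cyclic of degree $2$, and a direct degree count using $L_1 \cap L_2 = K$ shows $L_2 \cap L_1F = F$, so the hypotheses are satisfied. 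Assuming $w$ lies in the idelic product $N_{L_2/F}(J_{L_2}) \cdot N_{L_1F/F}(J_{L_1F})$, one obtains a decomposition $w = N_{L_2/F}(\delta) \cdot N_{L_1F/F}(\eta)$ with $\delta \in L_2^\times$ and $\eta \in (L_1F)^\times$. Applying $N_{F/K}$ and using the tower relations $N_{F/K} \circ N_{L_2/F} = N_{L_2/K}$ and $N_{L_1F/K}(\eta) = N_{L_1/K}(N_{L_1F/L_1}(\eta)) \in N_{L_1/K}(L_1^\times)$, we conclude $N_{F/K}(w) \in T$ and hence $a \in T$.

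The main obstacle is the verification of the idelic condition on $w$ in the second step. By local class field theory, at a place $\mathfrak{p}$ of $F$ the product of the two local norm subgroups equals $N_{M_\mathfrak{p}/F_\mathfrak{p}}(M_\mathfrak{p}^\times)$, where $M_\mathfrak{p} = (L_2)_{\mathfrak{p}'} \cap (L_1F)_{\mathfrak{p}''}$ is the local intersection. Globally $L_2 \cap L_1F = F$, so $M_\mathfrak{p} = F_\mathfrak{p}$ at all but finitely many places, at which the product subgroup is all of $F_\mathfrak{p}^\times$ and the condition is automatic. At the remaining ``bad'' places, where $M_\mathfrak{p}$ properly extends $F_\mathfrak{p}$, one must force $w$ into the corresponding proper subgroup of $F_\mathfrak{p}^\times$. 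For this we exploit the freedom in the first-step decomposition: $(u, w)$ may be replaced by $(uu_0, ww_0)$ for any $u_0 \in L_1^\times$ and $w_0 \in F^\times$ with $N_{L_1/K}(u_0) \cdot N_{F/K}(w_0) = 1$. Combined with weak approximation in $F^\times$ and the local constraints on $w$ inherited from $a \in N_{L/K}(J_L)$, one arranges a suitable modification so that the new $w$ meets the required local norm conditions at the finitely many bad places, completing the argument.
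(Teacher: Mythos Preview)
Your two-step plan is sound in outline, and the verifications that $L_1 \cap F = K$ and $L_2 \cap L_1F = F$ are correct. The genuine gap is in the last paragraph, where you assert that ``$M_{\mathfrak p} = F_{\mathfrak p}$ at all but finitely many places'' because $L_2 \cap L_1F = F$ globally. This inference is false: global linear disjointness does \emph{not} force local disjointness at almost all places. Concretely, write $L_1 = K(\sqrt{a},\sqrt{b})$, $L_2 = K(\sqrt{c},\sqrt{d})$, $F = K(\sqrt{c})$, and take an unramified place $v$ of $K$ that splits in $K(\sqrt{b})$ and $K(\sqrt{c})$ but is inert in $K(\sqrt{a})$ and $K(\sqrt{d})$. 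By Chebotarev such $v$ form a set of positive density (the four square classes are independent since $L_1 \cap L_2 = K$). For a place $\mathfrak p$ of $F$ over such $v$ one has $F_{\mathfrak p} = K_v$, and both $(L_2)_{\mathfrak p} = K_v(\sqrt{d})$ and $(L_1F)_{\mathfrak p} = K_v(\sqrt{a})$ equal the \emph{unique} unramified quadratic extension of $K_v$; hence $M_{\mathfrak p} \neq F_{\mathfrak p}$. Thus the ``bad'' set is infinite, and the weak-approximation patch you propose cannot be carried out.

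Moreover, even granting finiteness, the sketch does not explain why the available freedom $(u,w)\mapsto(uu_0,ww_0)$ with $N_{L_1/K}(u_0)N_{F/K}(w_0)=1$ is enough to push $w$ into the required local norm classes: the constraint on $w_0$ is that $N_{F/K}(w_0)\in N_{L_1/K}(L_1^\times)$, and you have not related this subgroup of $F^\times$ to the local conditions at the bad primes. By contrast, the paper's argument avoids the tower altogether: after disposing of the case where one $L_i$ satisfies the Hasse norm principle (via Proposition~\ref{P:2}), it treats the remaining case by a direct Hilbert-symbol computation (\cite[Exercise~5]{Cass}) showing that the two index-two kernels $\ker\varphi_1$, $\ker\varphi_2$ are distinct, whence ${K^\times}^2 \subset N_{L_1/K}(L_1^\times)N_{L_2/K}(L_2^\times)$, which immediately verifies condition~(2) of Proposition~\ref{P:2}.
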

\begin{proof}
Write $L_1 = K(\sqrt{a} , \sqrt{b})$ and $L_2 = K(\sqrt{c} ,
\sqrt{d})$. If at least one of the extensions satisfies the norm
principle then the result follows from Proposition 2 (see the remark
after the proposition). So, we only need to consider the case were
both extensions fail to satisfy the norm principle. Using Tate's
computation of the Tate-Shafarevich group for a Galois extension
mentioned in the introduction, one readily sees that all local
degrees of $L_i$ over $K$ are either 1 or 2, and then $\Sha(L_i/K)$
is of order 2 for both $i = 1, 2$. We let $S$ and $T$ denote the
sets of places of $K$ that split in $K(\sqrt{a})$ and $K(\sqrt{c})$
respectively. Following \cite[Exercise 5]{Cass}, consider the
following homomorphisms of $K^{\times}$ to $\{ \pm 1 \}$:
$$
\varphi_1(x) = \prod_{v \in S} (x , b)_v \ \ \text{and} \ \
\varphi_2(x) = \prod_{v \in T} (x , d)_v,
$$
where $(x , y)_v$ denotes the Hilbert symbol at $v$. Clearly $\ker
\varphi_i$ is an index two subgroup in $K^{\times}$ that according
to {\it loc. cit.} admits the following description
\begin{equation}\label{E:square}
\ker \varphi_i = \{ x \in K^{\times} \: \vert \: x^2 \in
N_{L_i/K}(L^{\times}_i) \}
\end{equation}
for $i = 1, 2$. Since $b$ and $d$ define different cosets modulo
${K^{\times}}^2$, it follows from properties of the Hibert symbol
(cf. \cite[Exercise 2.6]{Cass}) that the homomorphisms $\varphi_1$
and $\varphi_2$ are distinct, hence $(\ker \varphi_1)(\ker
\varphi_2) = K^{\times}$. Using (\ref{E:square}), we obtain the
inclusion
\begin{equation}\label{E:in}
{K^{\times}}^2 \subset N_{L_1/K}(L^{\times}_1)
N_{L_2/K}(L^{\times}_2).
\end{equation}
Now, let $x_i \in K^{\times}$ be such that $\varphi_i(x_i) = -1$.
Then $x^2_i \notin N_{L_i/K}(L^{\times}_i)$. On the other hand,
since  all the local degrees of $L_i$ over $K$ are either $1$ or
$2$, we see that $x^2_i \in K^{\times} \cap N_{L_i/K}(J_{L_i})$.
This means that the coset $x^2_i N_{L_i/K}(L^{\times}_i)$ is a
generator of $\Sha(L_i/K) \simeq \mathbb{Z}/2\mathbb{Z}$, hence
$$
K^{\times} \cap N_{L_i/K}(J_{L_i}) = \{1 ,
x^2_i\}N_{L_i/K}(L^{\times}_i).
$$
Now, taking into account (\ref{E:in}), we see that
$$
K^{\times} \cap N_{L_i/K}(L^{\times}_i) \subset
N_{L_1/K}(L^{\times}_1) N_{L_2/K}(L^{\times}_2),
$$
verifying thereby condition (2) of Proposition \ref{P:2} and
completing the proof of the multinorm principle for the pair $L_1 ,
L_2$.
\end{proof}

\vskip2mm

Another instance is when both extensions are of a prime degree $p$.
We recall that any extension $L/K$ of degree $p$ satisfies the norm
principle (cf. \cite[Proposition 6.10]{PlR}). The following
proposition provides an analog of this fact for the multinorm
principle.
\begin{proposition}\label{P:4}
Let $L_1$ and $L_2$ be two separable extensions of $K$ of a prime
degree $p$. Then the pair $L_1 , L_2$ satisfies the multinorm
principle.
\end{proposition}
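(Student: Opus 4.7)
The plan is to apply Proposition \ref{P:2} where possible and supply a direct argument in the remaining case. First I would reduce to $L_1 \not\cong L_2$ as $K$-algebras, since if $L_1 \cong L_2$ the multinorm principle collapses to the Hasse norm principle for $L_1/K$, which holds by \cite[Proposition 6.10]{PlR} because $[L_1:K] = p$.

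Assuming $L_1 \not\cong L_2$, a short group-theoretic argument inside $\mathrm{Gal}(E_1E_2/K)$ (the stabilizers of $L_1$ and $L_2$ have index $p$ but are distinct) shows that $L_1 \cap L_2 = K$ and $[L_1L_2 : K] = p^2$. If additionally $E_1 \cap E_2 = K$, the main theorem applies. Otherwise, I would first observe that both $L_i$ must be non-Galois over $K$: if say $L_1$ were cyclic of degree $p$, then $E_1 = L_1$, and $E_1 \cap E_2 \subset L_1$ would force $L_1 \subset E_2$, making $L_1$ conjugate to $L_2$ inside $E_2$ and contradicting $L_1 \not\cong L_2$.

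The crux is the subcase where both $L_i$ are non-Galois and $E_1 \cap E_2 \neq K$, where the hypothesis of Lemma \ref{L:1} fails. Here I would prove directly that the map $\varphi$ appearing in the proof of Lemma \ref{L:1} is still an isomorphism. Surjectivity follows from class field theory: the maximal abelian subextension $M_i$ of $L_i/K$ is trivial, so $K^\times N_{L_i/K}(J_{L_i}) = J_K$ for each $i$, whence $K^\times N_{L_1/K}(J_{L_1}) N_{L_2/K}(J_{L_2}) = J_K$. For the order count, $L/L_i$ has prime degree $p$ (since $[L:K] = p^2$) but cannot be Galois: otherwise $E_{3-i} \subset L$, and comparing $[E_{3-i}:K] > p$ with $[L:K] = p^2$ would force $L = E_{3-i}$, so $L_i$ would be a prime-degree subfield of $E_{3-i}$ hence conjugate to $L_{3-i}$, contradicting $L_1 \not\cong L_2$. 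Thus $L/L_i$ admits no nontrivial abelian subextension, the domain of $\varphi$ is trivial, and surjectivity then forces the target to be trivial as well.

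With $\varphi$ an isomorphism, the tail of the proof of Lemma \ref{L:1} goes through verbatim and gives $R_{L_1,L_2} = S_{L_1,L_2} T_{L_1,L_2}$. Finally, any $s = N_{L/K}(z) \in S_{L_1,L_2}$ can be rewritten as $N_{L_1/K}(N_{L/L_1}(z)) \in K^\times \cap N_{L_1/K}(J_{L_1})$, which equals $N_{L_1/K}(L_1^\times) \subset T_{L_1,L_2}$ by the Hasse norm principle for $L_1/K$. Hence $R \subset ST \subset T$, while $T \subset R$ is trivial, yielding the multinorm principle. The main obstacle I anticipate is the last subcase: keeping track of the intermediate fields carefully enough to rule out any unexpected abelian subextension of $L/L_i$ or $L/K$ when the Galois closures are entangled.
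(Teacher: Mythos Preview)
Your approach diverges from the paper's. Instead of a direct case analysis, the paper first proves a base-change lemma (Lemma~\ref{L:7}): if $[P:K]$ is prime to $p$ and the multinorm principle holds for $L_1P, L_2P$ over $P$, then it holds for $L_1, L_2$ over $K$. Taking $P$ to be the fixed field of a Sylow $p$-subgroup of $\mathrm{Gal}(E_1/K)$ makes $L_1P = E_1$ cyclic of degree $p$ over $P$; repeating for the second field reduces everything to a pair of cyclic degree-$p$ extensions, where Hasse's theorem together with Proposition~\ref{P:2} (or the trivial case $L_1 = L_2$) finishes at once.

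Your argument has a genuine gap at the claim $[L_1L_2:K] = p^2$. Distinct index-$p$ stabilizers $H_1, H_2 \subset G = \mathrm{Gal}(E_1E_2/K)$ only yield $\langle H_1, H_2\rangle = G$ (hence $L_1 \cap L_2 = K$) and $p < [G:H_1\cap H_2] \le p^2$; the upper bound need not be attained. The problematic case is $E_1 = E_2 =: E$ with $L_1 \not\cong L_2$, which \emph{does} occur: as the Remark following the proof of Proposition~\ref{P:4} explains (citing \cite{Ito}), a transitive subgroup of $S_p$ can possess two conjugacy classes of index-$p$ subgroups (e.g.\ $\mathrm{PSL}_2(11)$ for $p = 11$, with its two classes of $A_5$'s). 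Since $|G|$ then divides $p!$, we have $p^2 \nmid |G|$ and therefore $[L_1L_2:K] = [G : H_1\cap H_2] < p^2$, so your degree count for $L/L_i$ collapses. Your opening reduction to $L_1 \not\cong L_2$ does not absorb this case, and the later implication ``$L/L_i$ Galois $\Rightarrow E_{3-i} \subset L$'' is likewise unjustified ($E_{3-i}$ is the Galois closure of $L_{3-i}$ over $K$, not over $L_i$, and $L/K$ need not be Galois). The paper disposes of $E_1 = E_2$ precisely via the base-change trick: with $P$ as above one gets $L_1P = L_2P = E$, whence $N_{L_1/K}$ and $N_{L_2/K}$ coincide on both elements and ideles, and the multinorm principle reduces to the ordinary norm principle for $L_1/K$.
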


(Note that in this proposition we don't need to assume that our
extensions or their Galois closures are linearly disjoint.)

\vskip2mm

\begin{lemma}\label{L:7}
Let $L_1$ and $L_2$ be finite extensions of $K$. For any finite
extension $P$ of $K$ of degree relatively prime to both $[L_1 : K]$
and $[L_2 : K]$, the validity of the multinorm principle for the
pair $L_1P , L_2P$ of extensions of $P$ implies its validity for the
pair $L_1 , L_2$.
\end{lemma}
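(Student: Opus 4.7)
The plan is as follows. Given $a \in R_{L_1,L_2} = K^\times \cap N_{L_1/K}(J_{L_1}) N_{L_2/K}(J_{L_2})$, I will view $a$ as an element of $P^\times$, invoke the multinorm principle over $P$ to write $a$ as a product of element norms from $L_1P$ and $L_2P$, and then apply $N_{P/K}$ to descend back to $K$. This will yield $a^{[P:K]} \in T_{L_1,L_2}$, after which the coprimality hypothesis and a short exponent argument will force $a \in T_{L_1,L_2}$ itself, verifying condition (1) of Proposition \ref{P:2} for the pair $L_1, L_2$.

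For the lifting step I would first note that the coprimality of $[P:K]$ with $[L_i:K]$ forces $L_i$ and $P$ to be linearly disjoint over $K$: since $[L_iP:K]$ is a common multiple of $[L_i:K]$ and $[P:K]$, it is at least their product, hence equal to it. Writing $a = N_{L_1/K}(\xi_1) N_{L_2/K}(\xi_2)$ with $\xi_i \in J_{L_i}$ and lifting each $\xi_i$ to $\tilde{\xi}_i \in J_{L_iP}$ along the natural inclusion $L_i \hookrightarrow L_iP$, the compatibility of the norm with base change (which uses the linear disjointness of $L_i$ and $P$) gives $N_{L_iP/P}(\tilde{\xi}_i) = N_{L_i/K}(\xi_i)$ inside $J_P$. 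Consequently $a \in P^\times \cap N_{L_1P/P}(J_{L_1P}) N_{L_2P/P}(J_{L_2P})$, and the hypothesis furnishes $\alpha_i \in (L_iP)^\times$ with $a = N_{L_1P/P}(\alpha_1) N_{L_2P/P}(\alpha_2)$. Applying $N_{P/K}$ and using the tower identity $N_{P/K} \circ N_{L_iP/P} = N_{L_i/K} \circ N_{L_iP/L_i}$, one obtains
\[
a^{[P:K]} \;=\; N_{L_1/K}\bigl(N_{L_1P/L_1}(\alpha_1)\bigr) \cdot N_{L_2/K}\bigl(N_{L_2P/L_2}(\alpha_2)\bigr) \;\in\; T_{L_1,L_2}.
\]

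For the finish, I would observe that for any $a \in R_{L_1,L_2} \subseteq K^\times$ and each $i$, one has $a^{[L_i:K]} = N_{L_i/K}(a) \in N_{L_i/K}(L_i^\times) \subseteq T_{L_1,L_2}$; in particular $a^{\gcd([L_1:K],[L_2:K])} \in T_{L_1,L_2}$. Combining this with $a^{[P:K]} \in T_{L_1,L_2}$ and the coprimality of $[P:K]$ with $\gcd([L_1:K],[L_2:K])$, a B\'ezout identity writes $1$ as an integer combination of $[P:K]$ and $\gcd([L_1:K],[L_2:K])$, whence $a \in T_{L_1,L_2}$. The only non-routine ingredient is the commutativity of the norm-and-base-change diagram, which reduces to the elementary assertion $N_{L_iP/P}(x) = N_{L_i/K}(x)$ for $x \in L_i^\times$ once linear disjointness is in hand; the rest of the argument is purely formal.
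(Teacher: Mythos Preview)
Your argument is correct and follows essentially the same route as the paper: lift $a$ to a local product of norms over $P$ using linear disjointness, invoke the multinorm principle over $P$, apply $N_{P/K}$ to obtain $a^{[P:K]}\in T_{L_1,L_2}$, and finish with a B\'ezout argument using $a^{[L_i:K]}\in N_{L_i/K}(L_i^\times)$. The only cosmetic remark is that your reference to ``condition (1) of Proposition~\ref{P:2}'' is superfluous---you are directly establishing the multinorm principle, not appealing to the equivalences of that proposition (whose hypotheses on Galois closures are not assumed here).
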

\begin{proof}
For $i = 1, 2$, since $[L_i : K]$ is coprime to $[P : K]$, the
extensions $L_i$ and $P$ are linearly disjoint over $K$, which
implies that the norm map $N_{L_i/K}$ coincides (on $J_{L_i}$ and
$L^{\times}_i$ ) with the restriction of the norm map $N_{L_iP/P}$.
Now, suppose that the multinorm principle holds for the pair $L_1P ,
L_2P$ over $P$, and let
$$
x \in K^{\times} \cap N_{L_1/K}(J_{L_1}) N_{L_2/K}(J_{L_2}).
$$
Then it follows from the above remark that $x \in P^{\times} \cap
N_{L_1P/P}(J_{L_1P}) N_{L_2P/P}(J_{L_2P})$, and hence
$$
x = N_{L_1P/P}(y_1) N_{L_2P/P}(y_2) \ \ \text{for some} \ \ y_i \in
(L_iP)^{\times}, \ i = 1, 2.
$$
Applying $N_{P/K}$, we obtain
$$
x^{[P : K]} = N_{L_1/K}(N_{L_1P/L_1}(y_1))
N_{L_2/K}(N_{L_2P/L_2}(y_2)) \in N_{L_1/K}(L^{\times}_1)
N_{L_2/K}(L^{\times}_2).
$$
Since $x^{[L_1 : K]} \in N_{L_1/K}(L^{\times}_1)$ and the degrees
$[L_1 : K]$ and $[P : K]$ are relatively prime, we conclude that
$$
x \in N_{L_1/K}(L^{\times}_1) N_{L_2/K}(L^{\times}_2),
$$
proving the multinorm principle for $L_1 , L_2$.
\end{proof}

\vskip2mm

\noindent \textit{Proof of Proposition \ref{P:4}.} We first reduce
the proof to the case where both $L_1$ and $L_2$ are Galois
extensions of $K$. Let $E_1$ be the Galois closure of $L_1$ and let
$G = \mathrm{Gal}(E_1/K)$. Then $G$ is isomorphic to a subgroup of
the symmetric group $S_p$, so its Sylow $p$-subgroup $G_p$ is a
cyclic group of order $p$. Set $P = E^{G_p}_1$; then $E_1 = L_1P$.
Since the degree $[P : K]$ is coprime to $p$, according to Lemma
\ref{L:7}, it suffices to prove the multinorm principle for the pair
$L_1P , L_2P$ of extensions of $P$. This enables us to assume
without any loss of generality that one of the extensions is Galois.
Repeating the argument for the other extension, we can assume that
both extensions are Galois.

Now, let us consider the case where $L_1$ and $L_2$ are cyclic
Galois extensions of $K$ of degree $p$. By the Hasse theorem,
$L_i/K$ satisfies the norm principle for $i = 1, 2$. So, if $L_1
\cap L_2 = K$ then the multinorm principle for $L_1 , L_2$ follows
from Proposition \ref{P:2} as condition (2) therein obviously holds.
In the remaining case $L_1 = L_2$, the multinorm principle reduces
to the norm principle for $L_i$, and therefore holds as well. \hfill
$\Box$

\vskip2mm

\noindent \textbf{Remark.} If $L_1$ and $L_2$ are two separable
extensions of $K$ of a prime degree $p$, and $E_1$ and $E_2$ are
their Galois closures, then one of the following occurs: either the
degree of $E := E_1 \cap E_2$ is prime to $p$, or $E_1 = E_2$. To
see this, one first proves the following elementary lemma from group
theory: {\it Let $G$ be a transitive subgroup of $S_p$. If $N \neq
\{ 1 \}$ is a normal subgroup of $G$ then the order $\vert N \vert$
is divisible by $p$.} Then, if $E_1 \neq E_2$, for at least one $i
\in \{ 1 , 2 \}$, the group $\mathrm{Gal}(E_i/E)$ is a nontrivial
normal subgroup of $\mathrm{Gal}(E_i/K) \subset S_p$, hence has
order divisible by $p$. Since the order of $S_p$ is not divisible by
$p^2$, we obtain that $[E : K]$ is prime to $p$, as claimed.

Now, if $[E : K]$ is prime to $p$ then by Lemma \ref{L:7} it is
enough to prove the multinorm principle for the pair of extensions
$L'_1:= L_1E , L'_2 := L_2E$ of $E$. But the Galois closures of
$L'_1$ and $L'_2$ coincide with $E_1$ and $E_2$ respectively, hence
are linearly disjoint over $E$. So, the multinorm principle for
$L'_1 , L'_2$ immediately follows from Proposition \ref{P:2} as
$L'_1/E$ and $L'_2/E$ satisfy the norm principle.

An obvious way to construct distinct degree $p > 2$ extensions $L_1$
and $L_2$ of $K$ such that $E_1 = E_2$ is to pick an arbitrary
non-Galois degree $p$ extension $L_1$ and take for $L_2$ its
suitable Galois conjugate. We note, however, that the
group-theoretic constructions in \cite{Ito} allow one to produce
{\it non-conjugate} extensions with this property. In any case,
letting $P$ denote the fixed field of a Sylow $p$-subgroup of
$\mathrm{Gal}(E/K)$, we will have $L_1P = L_2P = E$. Then arguing as
in Lemma \ref{L:7} one shows that
$$
N_{L_1/K}(L^{\times}_1) = N_{L_2/K}(L^{\times}_2) \ \ \text{and} \ \
N_{L_1/K}(J_{L_1}) = N_{L_2/K}(J_{L_2})
$$
(even when $L_1$ and $L_2$ are not Galois conjugate!). Thus, in this
case the multinorm principle for $L_1 , L_2$ reduces to the norm
principle for $L_i/K$. This provides a somewhat more detailed
perspective on the result of Proposition \ref{P:4}.

\vskip3mm

Finally, we observe that the multinorm can be considered not only
for pairs but for any finite families of finite extensions of $K$.
More precisely, we say that a family $L_1, \ldots , L_m$ $(m
\geqslant 2)$ satisfies the multinorm principle if
$$
K^{\times} \cap N_{L_1/K}(J_{L_1}) \cdots N_{L_m/K}(J_{L_m}) =
N_{L_1/K}(L^{\times}_1) \cdots N_{L_m/K}(L^{\times}_m).
$$

\noindent \textbf{Example 2.} {\it The multinorm principle may fail
for a triple $L_1, L_2, L_3$ of finite Galois extensions of $K$ even
when the fields $L_i$ and $L_j$ are pairwise linearly disjoint over
$K$.} Indeed, set $K = \mathbb{Q}$ and
$$
L_1 = \mathbb{Q}(\sqrt{13}), \ L_2 = \mathbb{Q}(\sqrt{17}), \
\text{and} \ L_3 = \mathbb{Q}(\sqrt{13 \cdot 17}).
$$
Then
$$
K^{\times} \cap N_{L_1/K}(J_{L_1})
N_{L_2/K}(J_{L_2})N_{L_3/K}(J_{L_3}) = K^{\times},
$$
but
$N_{L_1/K}(L^{\times}_1)N_{L_2/K}(L^{\times}_2)N_{L_3/K}(L^{\times}_3)$
is a subgroup of $K^{\times}$ of index 2 (cf. \cite[Exercise
5]{Cass} and \cite[Lemma 4.8]{PR}), hence the multinorm principle
fails (see also \cite[\S 2]{Hurlimann}). \hfill $\Box$

\vskip2mm

Generalizing the Main Theorem of this note, one can show that given
finite Galois extensions $L_1, \ldots , L_m$ of $K$ such that
$$
\mathrm{Gal}(L_1 \cdots L_m/K) \simeq \mathrm{Gal}(L_1/K) \times
\cdots \times \mathrm{Gal}(L_m/K)
$$
(in other words, the whole family $L_1, \ldots , L_m$ is linearly
disjoint over $K$) then the multinorm principle still holds for
$L_1, \ldots , L_m$. This, however, requires some new considerations
which will be described in \cite{Pol}.

\vskip7mm

\centerline{\sc Appendix. Deflation and residuation maps and their
properties.}

\vskip3mm

In this appendix, we briefly sketch the construction of the
deflation and residuation maps and prove Lemma \ref{L:5} (note that
our account, unlike that in \cite{Weiss} and \cite{Horie}, is based
on homogeneous cochains).

Given a finite group $G$, we let $X = \{ X_i \}_{i \in \mathbb{Z}}$
denote the standard complex used to define the Tate cohomology
groups (cf. \cite[ch. IV, \S 6]{Cass}). More precisely, for $i
\geqslant 0$, $X_i = \mathbb{Z}[G^{i+1}]$ with the $G$-action
$s(g_0, \ldots , g_i) = (sg_0, \ldots , sg_i)$, and the differential
$d \colon X_{i+1} \to X_i$ given by
$$
d(g_0, \ldots , g_{i+1}) = \sum_{j = 0}^{i+1} (-1)^j (g_0, \ldots ,
g_{j-1} , g_{j+1}, \ldots , g_{i+1}).
$$
Furthermore, for $i \geqslant 1$, we set $X_{-i} =
\mathrm{Hom}_{\mathbb{Z}}(X_{i-1} , \mathbb{Z})$, which is a free
$\mathbb{Z}$-module with a basis $(s^*_1, \ldots , s^*_i)$, where
all $s_j \in G$, defined by
$$
(s^*_1, \ldots , s^*_i) (g_0, \ldots , g_{i-1}) = \left\{
\begin{array}{cl} 1 & \text{if} \ \ s_j = g_{j-1} \ \ \text{for all}
\  \ j, \\ 0 & \text{otherwise}, \end{array} \right.
$$
and the $G$-action $g(s^*_1, \ldots , s^*_i) = ((gs_1)^*, \ldots ,
(gs_i)^*)$. The differential $d \colon X_{-i} \to X_{-i-1}$ is given
by
$$
d(s^*_1, \ldots , s^*_i) = \sum_{j = 1}^{i+1} \sum_{g \in G} (-1)^j
(s^*_1, \ldots , s^*_{j-1}, g^*, s^*_j, \ldots , s^*_i).
$$
Finally, the ``special'' differential $d \colon X_0 \to X_{-1}$ is
defined by
$$
d(g_0) = \sum_{s \in G} s^*.
$$
Then for any $G$-module $A$ and all $i \in \mathbb{Z}$ we have
$$
\hat{H}^i(G , A) = H^i(\mathrm{Hom}_G(X , A)).
$$

\vskip2mm

\noindent \textit{Deflation map.} Given any normal subgroup $H$ of
$G$, we let $Y$ denote the standard complex for $G/H$.  Then for any
$G$-module $A$ and  each $i \geqslant 1$ there is a map $\delta_{-i}
\colon \mathrm{Hom}_G(X_{-i},A) \rightarrow
\mathrm{Hom}_G(Y_{-i},A)$ given by $$(\delta_{-i}f)(\alpha_1^*,
\dots, \alpha_i^*)= \sum_{g_iH = \alpha_i}f(g_1^*,\dots,g_i^*)$$ for
$f \in \mathrm{Hom}_G(X_{-i},A)$ and $\alpha_1, \dots, \alpha_i \in
G/H$. One can check that the image of $\delta_{-i}$ lies in
$\mathrm{Hom}_{G/H}(Y_{-i},A^H)$, hence $\delta_{-i}$ induces a map
\[ \mathrm{Def}^G_{G/H} \colon \hat{H}^{-i}(G,A) \rightarrow \hat{H}^{-i}(G/H,A^H) \]
called the {\it deflation map}.  For $i = 0$ one gives an ad hoc definition of the deflation map.
Namely, for any group $G$ and any $G$-module $A$ we have
$\hat{H}^0(G , A) \simeq A^G/N_G(A)$, where $N_G$ is the norm map,
$N_G(a) = \sum_{g \in G} ga$. Then
$$
\mathrm{Def}^G_{G/H} \colon \hat{H}^0(G , A) \to \hat{H}^0(G/H ,
A^H)
$$
is induced by the identification $A^G \to (A^H)^{G/H}$ and the
inclusion $N_G(A) \hookrightarrow N_{G/H}(A^H)$. (In terms of
homogeneous cochains, every element of $\hat{H}^0(G , A)$ is
represented by a function $f \in \mathrm{Hom}_G(\mathbb{Z}[G] , A)$
with values in $A^G$. Then $\mathrm{Def}^G_{G/H}$ is induced by the
map $\delta \colon \mathrm{Hom}_G(\mathbb{Z}[G] , A^G) \to
\mathrm{Hom}_{G/H}(\mathbb{Z}[G/H] , A^H)$ given by $\delta(f)(g_0H)
= f(g_0)$.)

\vskip2mm

\noindent {\it Residuation map.}  Let $G$, $H$, $X$, $Y$, and $A$ be
as above. We let $I_H$ denote the augmentation ideal of $\bZ[H]$,
and set $A_H = A/I_HA$. For each $i \geqslant 1$ there is a map
$\delta'_{-i} \colon \mathrm{Hom}_G(X_{-i},A) \rightarrow
\mathrm{Hom}_{G/H}(Y_{-i},A_H)$ given by
\[ (\delta'_if)(\alpha^*,\alpha_2^*, \dots, \alpha_i^*)= \sum_{g_iH = \alpha_i}f(g^*,g_2^*,\dots,g_i^*)+I_H, \]
where $g$ is an arbitrary (single) element such that $gH=\alpha$;
since $f$ is a $G$-map, this definition does not depend on the
choice of $g$. Then for $i \geqslant 2$, $\delta'_{-i}$ induces a
map on cohomology
\[\mathrm{Rsd}^G_{G/H} \colon \hat{H}^{-i}(G,A) \rightarrow \hat{H}^{-i}(G/H,A_H), \]
called the {\it residuation map}.
We note that in the special case where $A$ is a trivial $G$-module,
we have $A=A^H=A_H$, and
\begin{equation}\label{E:Rsd}
\vert H \vert \cdot \mathrm{Rsd}^G_{G/H} = \mathrm{Def}^G_{G/H}.
\end{equation}
We will make use of this fact below for $A = \bZ$.

\vskip3mm

\noindent {\it Proof of Lemma \ref{L:5}.} Fix $i \geqslant 2$, and
to simplify notation we will write $\mathrm{Inf}$, $\mathrm{Def}$, ...
instead of $\mathrm{Inf}^G_H$, $\mathrm{Def}^G_H$, etc. Let $\bar{f}
\in \hat{H}^{-i}(G , \mathbb{Z})$ and $\bar{\psi} \in \hat{H}^i(H ,
\mathbb{Z})$ be represented by the homogeneous cocycles $f \in
\mathrm{Hom}_G(\mathbb{Z}[(G^*)^i],\bZ)$, where $(G^*)^i = \{ (s^*_1,
\ldots , s^*_i) \, \vert \, s_j \in G \}$, and $\psi \in
\mathrm{Hom}_H(\mathbb{Z}[H^{i+1}] , \mathbb{Z})$. Furthermore,
$\mathrm{Def}(\bar{f})$ and $\mathrm{Rsd}(\bar{f})$ are represented
respectively by $\tilde{f}_1$ and $\tilde{f}_2  \in
\mathrm{Hom}_H(\mathbb{Z}[(H^*)^i] , \mathbb{Z})$ defined by
$$
\tilde{f}_1(h^*_1, \ldots , h^*_i) = \sum_{k_j \in K} f((h_1k_1)^*,
\ldots , (h_ik_i)^*)  \ \text{and}  \ \tilde{f}_2(h^*_1, h^*_2,
\ldots , h^*_i) = \sum_{k_j \in K} f(h^*_1, (h_2k_2)^*, \ldots ,
(h_ik_i)^*),
$$
and $\mathrm{Inf}(\bar{\psi})$ is represented by $\tilde{\psi} \in
\mathrm{Hom}_G(\mathbb{Z}[G^{i+1}] , \mathbb{Z})$ given by
$$
\tilde{\psi}(h_0k_0, \ldots , h_ik_i) = \psi(h_0, \ldots , h_i).
$$
Next, as shown in \cite[p. 105-108]{Cass}, the cup-product $\bar{a}
\cup \bar{b}$ of classes $\bar{a} \in \hat{H}^{-i}(G , \mathbb{Z})$
and $\bar{b} \in \hat{H}^i(G , \mathbb{Z})$ that are represented by
the cocycles $a$ and $b$, is represented by the function
$$
g_0 \mapsto \sum_{s_1, \ldots , s_i \in G} a(s^*_1, \ldots , s^*_i)
b(s_i, \ldots , s_1, g_0),
$$
and the cup-product of classes in $\hat{H}^{-i}(H , \mathbb{Z})$ and
$\hat{H}^i(H , \mathbb{Z})$ is described similarly. Finally, the
corestriction map from $H^0(H , \mathbb{Z}) = \mathbb{Z}/\vert H
\vert\mathbb{Z}$ to $H^0(G , \mathbb{Z}) = \mathbb{Z}/\vert G \vert
\mathbb{Z}$ is given by multiplication by $[G : H] = \vert K \vert$.

Putting this information together, we obtain that
$\mathrm{Cor}(\mathrm{Rsd}(\bar{f}) \cup \bar{\psi})$ is represented
by the function
$$
h_0k_0 \mapsto \vert K \vert \sum_{h_1, \ldots , h_i \in H}
\tilde{f}_2(h^*_1, \ldots , h^*_i) \psi(h_i, \ldots , h_1, h_0),
$$
and therefore in view of (\ref{E:Rsd}) by the function
$$
h_0k_0 \mapsto \sum_{h_1, \ldots , h_i \in H} \tilde{f}_1(h^*_1,
\ldots , h^*_i) \psi(h_i, \ldots , h_1, h_0) = \sum_{h_j \in H}
\sum_{k_j \in K} f((h_1k_1)^*, \ldots , (h_ik_i)^*) \psi(h_i, \ldots
, h_1, h_0)
$$
$$
= \sum_{h_j \in H, \, k_j \in K} f((h_1 k_1)^*, \ldots , (h_ik_i)^*)
\tilde{\psi}(h_1k_1, \ldots , h_ik_i, h_0k_0) = \sum_{s_j \in G}
f(s^*_1, \ldots , s^*_i) \tilde{\psi}(s_i, \ldots , s_1, h_0k_0).
$$
But the function
$$
h_0k_0 \mapsto \sum_{s_j \in G} f(s^*_1, \ldots , s^*_i)
\tilde{\psi}(s_i, \ldots , s_1, h_0k_0)
$$
also represents $\bar{f} \cup \mathrm{Inf}(\bar{\psi})$, yielding
our claim. \hfill $\Box$

\vskip5mm

\noindent {\small {\textbf{Acknowledgements.} The second-named
author was partially supported by NSF grant DMS-0965758, BSF grant
2010149 and the Humboldt Foundation. During the preparation of the
final version of this paper, he  was visiting the Mathematics
Department of the University of Michigan as a Gehring Professor; the
hospitality and generous support of this institution are thankfully
acknowledged. }}

\vskip5mm

\bibliographystyle{amsplain}

\end{document}